\newcommand{\thetitle}{On the geometry of the set of symmetric matrices with repeated eigenvalues}
\newcommand{\theauthor}{Paul Breiding, Khazhgali Kozhasov and Antonio Lerario}
\newcommand{\citecolor}{black}
\newcommand{\linkcolor}{black}
\newcommand{\txt}[1]{\text{\normalfont{#1}}}
\newcommand{\R}{\mathbb{R}}
\newcommand{\C}{\mathbb{C}}
\newcommand{\RP}{\mathbb{R}\mathrm{P}}
\newcommand{\HC}{\mathbb{C}}
\newcommand{\HN}{\mathbb{N}}
\newcommand{\HR}{\mathbb{R}}
\newcommand{\sC}{\mathscr{C}}
\renewcommand{\d}{\mathrm{ d}}
\renewcommand{\dot}[1]{\overset{\raisebox{-0.25ex}{\scalebox{0.4}{$\bullet$}}}{#1}}
\DeclareMathOperator*{\mean}{\mathbb{E}}
\numberwithin{equation}{section}
\numberwithin{figure}{section}
\theoremstyle{plain}
\newcounter{numbering} \numberwithin{numbering}{section}
\newtheorem{thm}[numbering]{Theorem}
\newtheorem{lemma}[numbering]{Lemma}
\newtheorem{prop}[numbering]{Proposition}
\theoremstyle{definition}
\theoremstyle{remark}
\newtheorem{rem}{Remark}
\newtheorem{example}{Example}
\crefname{equation}{}{}
\crefname{equation}{}{}
\crefname{figure}{Figure}{Figures}
\crefname{section}{Section}{Sections}
\crefname{lemma}{Lemma}{Lemmata}
\crefname{prop}{Proposition}{Propositions}
\crefname{thm}{Theorem}{Theorems}
\crefname{cor}{Corollary}{Corollaries}
\crefname{dfn}{Definition}{Definitions}
\crefname{notation}{Notations}{Notations}
\crefname{rem}{Remark}{Remarks}
\crefname{claim}{Claim}{claims}
\begin{document}
\title{\thetitle}
\author{\theauthor}
\thanks{\hspace{-0.5cm} PB: Max-Planck-Institute for Mathematics in the Sciences (Leipzig), breiding@mis.mpg.de,\\ KK: Max-Planck-Institute for Mathematics in the Sciences (Leipzig), kozhasov@mis.mpg.de,\\ AL: SISSA (Trieste), lerario@sissa.it}

\begin{abstract} We investigate some geometric properties of the real algebraic variety $\Delta$ of symmetric matrices with repeated eigenvalues. We explicitly compute the volume of its intersection with the sphere and prove a Eckart-Young-Mirsky-type theorem for the distance function from a generic matrix to points in $\Delta$.  We exhibit connections of our study to Real Algebraic Geometry (computing the Euclidean Distance Degree of $\Delta$) and Random Matrix Theory.
\end{abstract}
\maketitle
\section{Introduction}
In this paper we investigate the geometry of the set $\Delta$ (below called \emph{discriminant}) of real symmetric matrices with repeated eigenvalues and of unit Frobenius norm:
$$\Delta=\{Q\in \textrm{Sym}(n, \R) :  \lambda_i(Q)=\lambda_j(Q) \textrm{
for some $i\neq j$}\}\cap S^{N-1}.$$
Here, $\lambda_1(Q),\ldots,\lambda_n(Q)$ denote the eigenvalues of $Q$,  the dimension of the space of symmetric matrices is $N :=\frac{n(n+1)}{2}$ and $S^{N-1}$ denotes the unit sphere in $\txt{Sym}(n,\HR)$ endowed with the Frobenius norm $\|Q\| :=\sqrt{\textrm{tr}(Q^2)}$.

This discriminant  is a fundamental object and it appears in several areas of mathematics, from mathematical physics to real algebraic geometry, see for instance \cite{Arnold, Arnold2, Arnold3, Arnold4, rare, Agrachev, AgrachevLerario, Vassiliev}. We discover some new properties of this object (Theorem \ref{thm:vol} and Theorem \ref{thm:EYM}) and exhibit connections and applications of these properties to Random Matrix Theory (Section \ref{sec:RMT}) and Real Algebraic Geometry (Section \ref{sec:ED}).

The set $\Delta$ is an algebraic subset of $S^{N-1}$. It is defined by the \emph{discriminant polynomial}:
\begin{align*}
 \textrm{disc}(Q):=\prod_{i\neq j}(\lambda_i(Q)-\lambda_j(Q))^2,
\end{align*}
which is a non-negative homogeneous polynomial of degree $\txt{deg}(\txt{disc}) =n(n-1)$ in the entries of $Q$. Moreover, it is a sum of squares of real polynomials \cite{Il,Parlett} and $\Delta$ is of codimension two.
The set  $\Delta_\textrm{sm}$ of smooth points of  $\Delta$ is the set of real points of the smooth part of the Zariski closure of $\Delta$ in $\textrm{Sym}(n,\mathbb{C})$ and consists of matrices with \emph{exactly} two repeated eigenvalues. In fact,~$\Delta$ is stratified according to the multiplicity sequence of the eigenvalues; see~\cref{stratification}.
\subsection{The volume of the set of symmetric matrices with repeated eigenvalues}
 Our first main result concerns the computation of the \emph{volume} $|\Delta|$ of the discriminant, which is defined to be the Riemannian volume of the smooth manifold $\Delta_{\txt{sm}}$ endowed with the Riemannian metric induced by the inclusion $\Delta_\textrm{sm}\subset S^{N-1}$.
\begin{thm}[The volume of the discriminant]\label{thm:vol}
\begin{align*}
\frac{|\Delta|}{|S^{N-3}|}={n \choose 2}.
\end{align*}
\end{thm}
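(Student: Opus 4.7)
The approach is to parametrize the smooth locus $\Delta_\textrm{sm}$ by its spectral data, compute the Jacobian of this parametrization, and reduce $|\Delta|$ to a Gaussian integral that can be evaluated using random matrix identities.

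Every $Q\in\Delta_\textrm{sm}$ has a unique double eigenvalue $\lambda$ whose eigenspace is a $2$-plane $\Pi\in\textrm{Gr}(2,n)$, and $Q|_{\Pi^\perp}$ is a symmetric operator $A$ on $\Pi^\perp$ with simple spectrum disjoint from $\{\lambda\}$. Setting $Q=\lambda P_\Pi+\iota(A)$ (with $A$ extended by $0$ on $\Pi$) yields a diffeomorphism
\begin{equation*}
\Phi\colon\{(\Pi,\lambda,A)\in\textrm{Gr}(2,n)\times\mathbb{R}\times\textrm{Sym}(n-2,\mathbb{R}):\lambda\notin\textrm{spec}(A)\}\xrightarrow{\sim}\hat\Delta_\textrm{sm},
\end{equation*}
where $\hat\Delta=\mathbb{R}\cdot\Delta$ is the full cone.

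The crucial local computation is the Jacobian of $\Phi$. At a reference point with $\Pi=\textrm{span}(e_1,e_2)$ and $A=\textrm{diag}(\mu_3,\ldots,\mu_n)$, the three types of tangent vectors $\partial_\lambda\Phi=P_\Pi$, $\partial_A\Phi$ (the $A$-block), and $\partial_X\Phi=[\tilde X,Q]$ for $X\in T_\Pi\textrm{Gr}(2,n)\cong\textrm{Hom}(\Pi,\Pi^\perp)$ land in mutually Frobenius-orthogonal subspaces of $\textrm{Sym}(n,\mathbb{R})$ (the two diagonal blocks and the off-diagonal block). Their scalings are $\sqrt 2$ (since $\|P_\Pi\|_F=\sqrt 2$), $1$ (isometric), and $2^{n-2}\det(\lambda I-A)^2$ (from the linear map $X\mapsto \sqrt 2(\lambda I-A)X$ on the $2(n-2)$-dimensional space $\textrm{Hom}(\Pi,\Pi^\perp)$), producing
\begin{equation*}
|\textrm{Jac}(\Phi)|(\Pi,\lambda,A)=2^{n-3/2}\det(\lambda I-A)^2.
\end{equation*}

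Combining this with the cone identity $|\Delta|/|S^{N-3}|=(2\pi)^{-(N-2)/2}\int_{\hat\Delta}e^{-\|Q\|^2/2}\,d\textrm{vol}$ (which follows from $\dim\hat\Delta=N-2$ and polar integration, using $|S^{N-3}|=2\pi^{(N-2)/2}/\Gamma((N-2)/2)$) and the $O(n)$-invariance to factor out the Grassmannian, the problem reduces to
\begin{equation*}
\frac{|\Delta|}{|S^{N-3}|}=\frac{2^{n-3/2}\,|\textrm{Gr}(2,n)|}{(2\pi)^{(N-2)/2}}\int_\mathbb{R}\int_{\textrm{Sym}(n-2,\mathbb{R})}e^{-(2\lambda^2+\|A\|^2)/2}\det(\lambda I-A)^2\,dA\,d\lambda.
\end{equation*}
Applying Weyl integration to the inner $A$-integral converts this into a Selberg/Mehta-type eigenvalue integral, namely (up to explicit constants) the expected squared characteristic polynomial of a GOE matrix, a classical quantity computable via Hermite polynomials.

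The main obstacle is the final arithmetic: showing that this Selberg-type integral, combined with $|\textrm{Gr}(2,n)|=|O(n)|/(|O(2)|\cdot|O(n-2)|)$ and all the $\Gamma$- and $\pi$-factors from the Gaussian-to-sphere conversion, collapses to the integer $\binom{n}{2}$. The combinatorial factor should naturally emerge from the structure $\textrm{Gr}(2,n)=O(n)/(O(2)\times O(n-2))$, reflecting the $\binom{n}{2}$ possible coalescing pairs; one can sanity-check the computation at $n=3$, where $|\textrm{Gr}(2,3)|=2\pi$ and direct evaluation of the (one-dimensional) inner integral gives $J'=\tfrac{3\sqrt{2}\pi}{2}$, yielding $|\Delta|/|S^3|=3=\binom{3}{2}$.
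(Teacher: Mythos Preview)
Your parametrization via $\textrm{Gr}(2,n)\times\mathbb{R}\times\textrm{Sym}(n-2,\mathbb{R})$ is a cosmetic variant of the paper's $O(n)\times (S^{n-2})_*$ parametrization (you have simply quotiented by the stabilizer $O(2)\times O(n-2)$ and un-coned), and your Jacobian $2^{n-3/2}\det(\lambda I-A)^2$ is correct and matches Lemma~3.1 after this change of variables. The reduction to the Gaussian integral
\[
\int_{\mathbb{R}}\mean_{Q\sim\mathrm{GOE}(n-2)}\big[\det(Q-u\mathbbm{1})^2\big]\,e^{-u^2}\,du
\]
is exactly where the paper arrives at equation~(3.4).

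The genuine gap is that you stop precisely where the real work begins. You describe the evaluation of the integral above as ``a classical quantity computable via Hermite polynomials'' and then call what remains ``the final arithmetic''. It is not arithmetic: the closed form
\[
\int_{\mathbb{R}}\mean_{Q\sim\mathrm{GOE}(k)}\big[\det(Q-u\mathbbm{1})^2\big]\,e^{-u^2}\,du=\sqrt{\pi}\,\frac{(k+2)!}{2^{k+1}}
\]
is the content of Theorem~1.7, and the paper devotes all of Section~4 to proving it. The authors state explicitly that, while Mehta gives general moment formulas, they ``were unable to locate an exact evaluation of the formula for the second moment in the literature'', so this cannot be quoted as known. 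The proof requires extracting from Mehta's Pfaffian/determinant formulas (which split into even and odd $k$) explicit sums of $2\times 2$ and $3\times 3$ determinants in Hermite polynomials, integrating these against $e^{-u^2}$ via the orthogonality relations, and telescoping the resulting sums $\sum_{j=0}^m(4j+1)$ (even case) and the analogue in the odd case. Only after this identity is established do the $\Gamma$- and $\pi$-factors cancel to yield $\binom{n}{2}$. Your $n=3$ sanity check is reassuring but does not substitute for this step; as written, the proposal is a correct outline whose central computation is missing.
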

\begin{rem}Results of this type (the computation of the volume of some relevant algebraic subsets of the space of matrices) have started appearing in the literature since the 90's \cite{EdelmanKostlan, EdelmanKostlanShub}, with a particular emphasis on asymptotic studies and complexity theory, and have been crucial for the theoretical advance of numerical algebraic geometry, especially for what concerns the estimation of the so called \emph{condition number} of linear problems \cite{Demmel}. The very first result gives the volume of the set $\Sigma\subset \R^{n^2}$ of square matrices with zero determinant and Frobenius norm one; this was computed in \cite{EdelmanKostlan, EdelmanKostlanShub}:
$$ \frac{|\Sigma|}{|S^{n^2-1}|}=\sqrt{\pi}\frac{\Gamma\left(\frac{n+1}{2}\right)}{\Gamma\left(\frac{n}{2}\right)}\sim \sqrt{\frac{\pi}{2}}n^{1/2}.$$
For example, this result is used in  \cite[Theorem 6.1]{EdelmanKostlan} to compute the average number of zeroes of the determinant of a matrix of linear forms.
Subsequently this computation was extended to include the volume of the set of $n\times m$ matrices of given corank in \cite{Beltran} and the volume of the set of \emph{symmetric} matrices with determinant zero in \cite{gap}, with similar expressions. Recently, in \cite{BelKoz} the above formula and \cite[Thm. 3]{gap}  were used to compute the expected condition number of \emph{the polynomial eigenvalue problem} whose input matrices are taken to be random.
\end{rem}
In a related paper \cite{RS} we use Theorem \ref{thm:vol} for counting the average number of singularities of a random spectrahedron. Moreover, the proof of Theorem \ref{thm:vol} requires the evaluation of the expectation of the square of the characteristic polynomial of a GOE$(n)$ matrix (Theorem \ref{thm:charpol} below), which constitutes a result of independent interest.

Theorem \ref{thm:vol} combined with Poincar\'e's kinematic formula from \cite{Howard} allows to compute the \emph{average} number of symmetric matrices with repeated eigenvalues in a uniformly distributed projective two-plane $L \subset \mathrm{P}\textrm{Sym}(n, \R)\simeq\R\txt{P}^{N-1}$:
\begin{equation}\label{eq:meancut} \mean \#(L\cap \mathrm{P}\Delta)=\frac{|\mathrm{P}\Delta|}{|\RP^{N-3}|} =\frac{|\Delta|}{|S^{N-3}|}={n\choose 2},\end{equation}
where by $\txt{P}\Delta\subset \mathrm{P}\textrm{Sym}(n, \R)\simeq\R\txt{P}^{N-1}$ we denote the projectivization of the discriminant.
The following optimal bound on the number $\#(L\cap \txt{P}\Delta)$ of symmetric matrices with repeated eigenvalues in a generic projective two-plane $L\simeq \R\txt{P}^2\subset \R\txt{P}^{N-1}$ was found in \cite[Corollary 15]{SSV}:
\begin{equation}\label{eq:cut} \#(L\cap \mathrm{P}\Delta)\leq {n+1\choose 3}.\end{equation}
\begin{rem} Consequence \eqref{eq:meancut} combined with \eqref{eq:cut} ``violates'' a frequent phenomenon in random algebraic geometry, which goes under the name of \emph{square root law}: for a large class of models of random systems, often related to the so called Edelman-Kostlan-Shub-Smale models \cite{EdelmanKostlan, shsm, EdelmanKostlanShub, Ko2000, ShSm3, ShSm1}, the average number of solutions equals (or is comparable to) the square root of the maximum number; here this is not the case. We also observe that, surprisingly enough, the \emph{average cut} of the discriminant is an integer number (there is no reason to even expect that it should be a rational number!).
\end{rem}
More generally one can ask about the expected number of matrices with a multiple eigenvalue in a ``random'' compact 2-dimensional family. We prove the following.
\begin{thm}[Multiplicities in a random family]\label{prop:family}Let $F:\Omega\to \mathrm{Sym}(n, \R)$ be a random Gaussian field $F=(f_1, \ldots, f_N)$ with i.i.d. components and denote by $\pi:\mathrm{Sym}(n, \R)\backslash\{0\}\to S^{N-1}$ the projection map. Assume that:
  \begin{enumerate}\item with probability one the map $\pi\circ F$ is an embedding and
    \item the expected number of solutions of the random system $\{f_1=f_2=0\}$ is finite.
    \end{enumerate} Then:
$$ \mathbb{E}\#F^{-1}(\sC(\Delta))={n \choose 2}\mean\#\{f_1=f_2=0\},$$
where $\sC(\Delta)\subset \txt{Sym}(n,\R)$ is the cone over $\Delta$.
\end{thm}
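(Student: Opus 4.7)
The approach is to combine the rotational invariance of the Gaussian ensemble with the Poincaré–Howard kinematic formula on the sphere $S^{N-1}$, and to read both sides of the claimed identity off from the expected area of the same auxiliary object. Set $M := \pi \circ F(\Omega) \subset S^{N-1}$; by hypothesis (1), $M$ is almost surely a $2$-dimensional embedded submanifold, and its expected area $\mean |M|$ is the quantity I will cancel in the end.

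The assumption that the components $f_i$ are i.i.d.\ centred Gaussians forces the law of $F$ to be invariant under the linear action of $O(N)$ on $\mathrm{Sym}(n,\R) \cong \R^N$: for every $U \in O(N)$ the fields $UF$ and $F$ are equidistributed, so $\mean \#F^{-1}(B) = \mean \#F^{-1}(U^{-1}B)$ for every Borel cone $B$. Averaging this identity against the Haar probability measure on $O(N)$, exchanging expectation and integral by Fubini, and applying the Poincaré formula on $S^{N-1}$ from~\cite{Howard} to the random $2$-surface $M$ and a fixed codimension-$2$ subset $Y \subset S^{N-1}$, one obtains the master identity
\begin{equation*}
\mean \#F^{-1}(\mathcal{C}(Y)) \;=\; \mean \int_{O(N)} \#(M \cap UY)\, dU \;=\; \frac{2\,|Y|}{|S^2|\cdot |S^{N-3}|}\cdot \mean |M|.
\end{equation*}

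I then specialise this in two ways. Taking $Y = \Delta$ produces the left-hand side of the theorem. Taking $Y = L \cap S^{N-1}$, where $L = \{q \in \mathrm{Sym}(n,\R) : q_1 = q_2 = 0\}$ is a coordinate subspace of codimension two, produces the right-hand side, since $L\cap S^{N-1}$ is a great $(N-3)$-sphere and $F^{-1}(L) = \{f_1 = f_2 = 0\}$. Dividing the two instances of the master identity cancels the unknown factor $\mean|M|$ and gives
\begin{equation*}
\frac{\mean \#F^{-1}(\mathcal{C}(\Delta))}{\mean \#\{f_1 = f_2 = 0\}} \;=\; \frac{|\Delta|}{|S^{N-3}|} \;=\; \binom{n}{2},
\end{equation*}
the last equality being Theorem~\ref{thm:vol}.

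The main subtlety will be to apply the Poincaré–Howard formula to the stratified set $\Delta$: it is not a smooth submanifold, but a real algebraic variety whose smooth locus $\Delta_{\mathrm{sm}}$ is open and full-dimensional in $\Delta$, while the higher-multiplicity strata have strictly larger codimension in $S^{N-1}$. Consequently for almost every $U \in O(N)$ the rotated surface $UM$ meets $\Delta$ only inside $\Delta_{\mathrm{sm}}$ and does so transversally, so that Howard's formula may be applied to the smooth stratum alone and the singular strata contribute nothing to the count. Finiteness of both expectations, needed to justify the Fubini exchange, is automatic from hypothesis (2) together with $|\Delta| < \infty$ from Theorem~\ref{thm:vol}.
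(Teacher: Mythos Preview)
Your proposal is correct and follows essentially the same route as the paper: rotational invariance of the i.i.d.\ Gaussian field under $O(N)$, two applications of Howard's kinematic formula on $S^{N-1}$ (once with $\Delta$, once with the great $(N-3)$-sphere $L\cap S^{N-1}=\{x_1=x_2=0\}$), and an appeal to Theorem~\ref{thm:vol} for the ratio $|\Delta|/|S^{N-3}|$. The only cosmetic difference is that you package the two applications as specialisations of a single ``master identity'' and then divide, whereas the paper chains them sequentially; you are also more explicit than the paper about why the singular strata of $\Delta$ do not contribute.
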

\begin{example}
When each $f_i$ is a Kostlan polynomial of degree $d$, then the hypotheses of Theorem~\ref{prop:family} are verified and $\mean \#\{f_1=f_2=0\}=2d |\Omega|/|S^2|$; when each $f_i$ is a degree-one Kostlan polynomial and $\Omega=S^2$, then $\mean\#\{f_1=f_2=0\}=2$ and we recover \eqref{eq:meancut}.
\end{example}

\subsection{An Eckart-Young-Mirsky-type theorem}
The classical Eckart-Young-Mirsky theorem allows to find a best low rank approximation to a given matrix.

For $r\leq m\leq n$ let's denote by $\Sigma_r$ the set of $m\times n$ complex matrices of rank $r$. Then for a given $m\times n$ real or complex matrix $A$ a rank $r$ matrix $\tilde{A}\in \Sigma_r$ which is a global minimizer of the distance function
\begin{equation*}
\txt{dist}_A: \Sigma_r \rightarrow \R,\quad B\mapsto \Vert A-B\Vert:=\sqrt{\sum_{i=1}^m\sum_{j=1}^n |a_{ij}-b_{ij}|^2}
\end{equation*}
is called \emph{a best rank $r$ approximation to $A$}. The Eckart-Young-Mirsky theorem states that if $A=U^*SV$ is the singular value decomposition of $A$, i.e., $U$ is an $m\times m$ real or complex unitary matrix, $S$ is an $m\times n$ rectangular diagonal matrix with non-negative diagonal entries $s_1\geq \dots \geq s_m\geq 0$ and $V$ is an $n\times n$ real or complex unitary matrix, then $\tilde{A} = U^*\tilde{S}V$ is a best rank $r$ approximation to $A$, where $\tilde{S}$ denotes the rectangular diagonal matrix with $\tilde{S}_{ii} = s_i$ for $i=1,\dots,r$ and $\tilde{S}_{jj}=0$ for $j=r+1,\dots,m$.  Moreover, a best rank $r$ approximation to a sufficiently generic matrix is actually unique. More generally, one can show that any critical point of the distance function $\txt{dist}_A:\Sigma_r \rightarrow \R$ is of the form $U^*\tilde{S}^I V,$ where $I\subset \{1,2,\dots,m\}$ is a subset of size $r$ and $\tilde{S}^I$ is the rectangular diagonal matrix with $\tilde{S}^I_{ii} = s_i$ for $i\in I$ and $\tilde{S}^I_{jj} = 0$ for $j\notin I$. In particular, the number of critical points of $\txt{dist}_A$ for a generic matrix $A$ is $\tbinom{n}{r}$. In \cite{EDD} the authors call this count the \emph{Euclidean Distance Degree} of $\Sigma_r$; see also \cref{sec:ED} below.

In the case of real symmetric matrices similar results are obtained by replacing singular values $\sigma_1\geq \dots\geq \sigma_n$ with absolute values of eigenvalues $|\lambda_1|>\dots>|\lambda_n|$ and singular value decomposition $U\Sigma V^*$ with spectral decomposition $C^T\Lambda C$; see \cite[Thm. 2.2]{HS1995} and \cite[Sec. 2]{gap}.

For the distance function from a symmetric matrix to the cone over $\Delta$ we also have an Eckart-Young-Mirsky-type theorem. We prove this theorem in \cref{sec:proof1}.
\begin{thm}[Eckart-Young-Mirsky-type theorem]\label{cor:EYM}
Let $A\in \txt{Sym}(n,\R)$ be a generic real symmetric matrix and let $A=C^T\Lambda C$ be its spectral decomposition with $\Lambda= \txt{diag}(\lambda_1,\dots,\lambda_n)$. Any critical point of the distance function
$$\txt{d}_A: \sC(\Delta_{\txt{sm}})\setminus \{0\} \rightarrow \R$$
is of the form $C^T \Lambda_{i,j} C$, where
\begin{equation*}
\Lambda_{i,j} = \txt{diag}\left(\lambda_1, \dots, \underset{i}{\frac{\lambda_i+\lambda_j}{2}}, \dots, \underset{j}{\frac{\lambda_i+\lambda_j}{2}},\dots,\lambda_n\right),\quad 1\leq i<j\leq n.
\end{equation*}
Moreover, the function $\txt{d}_A: \sC(\Delta) \rightarrow \R$ attains its global minimum at exactly one of the critical points $C^T\Lambda_{i,j}C\in  \sC(\Delta_{\txt{sm}})\setminus \{0\}$ and the value of the minimum of $\txt{d}_A$ equals:
\begin{equation*}
\min\limits_{B\in \sC(\Delta)} \Vert A-B\Vert = \min\limits_{1\leq i<j\leq n} \frac{|\lambda_i-\lambda_j|}{\sqrt{2}}.
\end{equation*}
\end{thm}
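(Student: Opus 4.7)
The plan is to exploit orthogonal invariance of the Frobenius norm to reduce to the diagonal case, then describe the normal space to $\sC(\Delta_{\txt{sm}})$ via first-order eigenvalue perturbation theory, and finally solve the Lagrange equation for a critical point by matching spectra and eigenvectors. Since $\|OXO^T\|=\|X\|$ for every $O\in\txt{O}(n)$ and $\sC(\Delta_{\txt{sm}})$ is stable under such conjugation, the critical points of $\txt{d}_A$ are exactly the $C^T B C$ with $B$ critical for $\txt{d}_\Lambda$; I therefore assume $A=\Lambda=\txt{diag}(\lambda_1,\ldots,\lambda_n)$.

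At a diagonal point $M\in\sC(\Delta_{\txt{sm}})\setminus\{0\}$ with $\mu_i=\mu_j=\mu$ and the other $\mu_k$ pairwise distinct, first-order eigenvalue perturbation theory yields that the two eigenvalues of $M+tH$ near $\mu$ are $\mu+t\alpha_\pm+O(t^2)$, where $\alpha_\pm$ are the eigenvalues of the $2\times 2$ block $\bigl(\begin{smallmatrix}H_{ii}&H_{ij}\\H_{ij}&H_{jj}\end{smallmatrix}\bigr)$; these coincide to first order precisely when the block is scalar, giving the two linear conditions $H_{ii}=H_{jj}$ and $H_{ij}=0$. Since $\sC(\Delta_{\txt{sm}})$ has codimension $2$, these define its tangent space at $M$, and dualizing yields
$$N_M\,\sC(\Delta_{\txt{sm}})=\txt{span}\bigl(E_{ii}-E_{jj},\;E_{ij}+E_{ji}\bigr);$$
at a general smooth point $B=O^TMO$ the normal space is conjugated by $O$. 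The critical equation $\Lambda-B\perp T_B\sC(\Delta_{\txt{sm}})$ therefore reads
$$O\Lambda O^T\,=\,M+\alpha(E_{ii}-E_{jj})+\beta(E_{ij}+E_{ji})$$
for some $\alpha,\beta\in\R$; the right-hand side is diagonal outside positions $i,j$ and has $2\times 2$-block eigenvalues $\mu\pm\sqrt{\alpha^2+\beta^2}$. Matching spectra with $\Lambda$ (pairwise distinct by genericity) yields a permutation $\sigma$ with $\mu_k=\lambda_{\sigma(k)}$ for $k\neq i,j$ and $\mu=\tfrac{1}{2}(\lambda_{\sigma(i)}+\lambda_{\sigma(j)})$, while matching the $n-2$ simple eigenvectors forces $Oe_{\sigma(k)}=\pm e_k$ for $k\neq i,j$; this pins down the $\mu$-eigenspace of $B$ as $\txt{span}(e_{\sigma(i)},e_{\sigma(j)})$ and its other eigenvectors as standard basis vectors, so $B=\Lambda_{\sigma(i),\sigma(j)}$. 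Conversely, $\Lambda-\Lambda_{i,j}=\tfrac{\lambda_i-\lambda_j}{2}(E_{ii}-E_{jj})\in N_{\Lambda_{i,j}}\sC(\Delta_{\txt{sm}})$, confirming that each of the $\binom{n}{2}$ matrices $\Lambda_{i,j}$ is indeed a critical point.

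The distance $\|\Lambda-\Lambda_{i,j}\|=|\lambda_i-\lambda_j|/\sqrt{2}$ is immediate, so the minimum of $\txt{d}_\Lambda$ on $\sC(\Delta_{\txt{sm}})$ equals $\min_{i<j}|\lambda_i-\lambda_j|/\sqrt{2}$, attained at a unique pair for generic $A$; since $\sC(\Delta_{\txt{sm}})$ is dense in the closed cone $\sC(\Delta)$ and $\txt{d}_A$ is continuous, the same value is the infimum on $\sC(\Delta)$, and it is actually realized there by the corresponding $\Lambda_{i,j}\in\sC(\Delta_{\txt{sm}})$. I expect the main technical obstacle to be the first-order perturbation description of the normal space at a repeated-eigenvalue point, together with the bookkeeping of the gauge freedom in the parameterization $B=O^TMO$, which must be handled carefully so that the $\binom{n}{2}$ critical configurations found are exhaustive and not over-counted.
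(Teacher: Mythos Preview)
Your argument is correct and takes a genuinely different route from the paper's. The paper obtains this theorem as the special case $w=(n-2,1,0,\dots,0)$ of its stratum-by-stratum result, and proves that result by invoking an external theorem (cited as \cite{BD2017}) to the effect that for an orthogonally invariant matrix variety the ED critical points of $A=C^T\Lambda C$ are exactly the conjugates $C^T\tilde\Lambda C$ of ED critical points $\tilde\Lambda$ in the diagonal slice; once reduced to the diagonal, the problem becomes orthogonal projection onto the hyperplane arrangement $\{\lambda_i=\lambda_j\}$. You instead compute the normal space $N_B\,\sC(\Delta_{\txt{sm}})$ directly via first-order perturbation of the double eigenvalue, write the Lagrange condition $O\Lambda O^T=M+\alpha(E_{ii}-E_{jj})+\beta(E_{ij}+E_{ji})$, and solve it by matching spectra and eigenvectors. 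Your approach is self-contained and elementary (no appeal to the ED machinery for group-invariant varieties), while the paper's approach is shorter and immediately yields the analogous statement for every stratum $\sC(\Delta)^w$.

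One point to tighten in your last paragraph: from ``the only critical points on $\sC(\Delta_{\txt{sm}})$ are the $\Lambda_{i,j}$'' you cannot directly conclude that the \emph{minimum} on the non-closed manifold $\sC(\Delta_{\txt{sm}})$ equals the smallest critical value, since a minimizing sequence could in principle escape to the singular boundary. The quickest fix is a direct lower bound: for any $B\in\sC(\Delta)$ with eigenvalues $\nu_1,\dots,\nu_n$ and $\nu_p=\nu_q$, the Hoffman--Wielandt inequality gives $\|A-B\|^2\ge(\lambda_{\pi(p)}-\nu_p)^2+(\lambda_{\pi(q)}-\nu_q)^2\ge\tfrac12(\lambda_{\pi(p)}-\lambda_{\pi(q)})^2$ for some permutation $\pi$, hence $\|A-B\|\ge\min_{i<j}|\lambda_i-\lambda_j|/\sqrt2$. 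Since $\Lambda_{i^*,j^*}$ realizes this value and lies in $\sC(\Delta_{\txt{sm}})$ for generic $A$, the global minimum statement follows.
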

\begin{rem}
Since $\sC(\Delta)\subset \txt{Sym}(n,\R)$ is the homogeneous cone over $\Delta\subset S^{N-1}$ the above theorem readily implies an analogous result for the spherical distance function from $A\in S^{N-1}$ to $\Delta$. The critical points are $(1-\frac{(\lambda_i-\lambda_j)^2}{2})^\frac{-1}{2}\,C^T \Lambda_{i,j} C$
and the global minimum of the spherical distance function $\txt{d}^S$ is $\min_{B\in \Delta} \txt{d}^S(A,B) = \min_{1\leq i<j\leq n} \arcsin\left(\tfrac{|\lambda_i-\lambda_j|}{\sqrt{2}}\right)$.
\end{rem}
The theorem is a special case of \cref{thm:EYM} below, that concerns the critical points of the distance points to fixed \emph{stratum} of $\sC(\Delta)$. These strata are in bijection with vectors of natural numbers $w=(w_1,w_2,\dots,w_n)\in \HN^n$ such that $\sum_{i=1}^n i \,w_i = n$ as follows: let us denote by $\sC(\Delta)^w$ the smooth semialgebraic submanifold of $\txt{Sym}(n,\R)$ consisting of symmetric matrices that for each $i\geq 1$ have exactly $w_i$ eigenvalues of multiplicity $i$.
Then, by \cite[Lemma 1]{SV1995}, the semialgebraic sets $\sC(\Delta)^w$ with $w_1<n$ form a stratification of $\sC(\Delta)$:
\begin{equation}\label{stratification}
\sC(\Delta) = \bigsqcup_{w\, :\, w_1<n} \sC(\Delta)^w
\end{equation}
In this notation, the complement of $\sC(\Delta)$ can be written  $\sC(\Delta)^{(n,0,\dots,0)} = \txt{Sym}(n,\R)\setminus \sC(\Delta)$. By \cite[Lemma 1.1]{Arnold}, the codimension of $\sC(\Delta)^w$ in the space $\txt{Sym}(n,\R)$ equals
\begin{equation*}
\txt{codim}(\sC(\Delta)^w) = \sum\limits_{i=1}^n \frac{(i-1)(i+2)}{2}w_i
\end{equation*}
Let us denote by $\txt{Diag}(n,\R)^w:=\txt{Diag}(n,\R)\cap \sC(\Delta)^w$ the set of diagonal matrices in $\sC(\Delta)^w$ and its Euclidean closure by $\overline{\txt{Diag}(n,\R)^w}$. This closure is an arrangement of $\frac{n!}{1!^{w_1}2!^{w_2}3!^{w_3}\dots}$ many $(\sum_{i=1}^n w_i)$-dimensional planes. Furthermore, for a sufficiently generic diagonal matrix $\Lambda=\txt{diag}(\lambda_1,\dots,\lambda_n)$ the distance function
\begin{equation*}
\txt{d}_{\Lambda}: \txt{Diag}(n,\R)^w \rightarrow \R,\quad
\tilde{\Lambda}=\txt{diag}(\tilde{\lambda}_1,\dots,\tilde{\lambda}_n) \mapsto \sqrt{\sum\limits_{i=1}^n (\lambda_i-\tilde{\lambda}_i)^2}
\end{equation*}
has $\frac{n!}{1!^{w_1}2!^{w_2}3!^{w_3}\dots}$ critical points each of which is the orthogonal projection of $\Lambda$ on one of the planes in the arrangement $\overline{\txt{Diag}(n,\R)^w}$ and the distance $\txt{d}_{\Lambda}$ attains its unique global minimum at one of these critical points. We will show that an analogous result holds for
\begin{equation*}
\txt{d}_A: \sC(\Delta)^w \rightarrow \R,\quad
\tilde{A}\mapsto \Vert A-\tilde{A}\Vert=\sqrt{\txt{tr}((A-\tilde{A})^2)},
\end{equation*}
the distance function from a general symmetric matrix $A\in \txt{Sym}(n,\R)$ to the smooth semialgebraic manifold $\sC(\Delta)^w$. The proof for the following theorem is in \cref{sec:proof1}.
\begin{thm}[Eckart-Young-Mirsky-type theorem for the strata]\label{thm:EYM}
Let $A\in \txt{Sym}(n,\R)$ be a generic real symmetric matrix and let $A=C^T\Lambda C$ be its spectral decomposition. Then:
\begin{enumerate}
\item Any critical point of the distance function $\txt{d}_A: \sC(\Delta)^w \rightarrow \R$ is of the form $C^T \tilde{\Lambda} C$, where $\tilde{\Lambda}\in\txt{Diag}(n,\R)^w$ is the orthogonal projection of $\Lambda$ onto one of the planes in $\overline{\txt{Diag}(n,\R)^w}$.
\item The distance function $\txt{d}_A:  \sC(\Delta)^w \rightarrow \R$ has exactly $\frac{n!}{1!^{w_1}2!^{w_2}3!^{w_3}\dots}$ critical points, one of which is the unique global minimum of $\txt{d}_A$.
\end{enumerate}
\end{thm}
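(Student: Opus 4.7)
The plan is to exploit the $O(n)$-equivariance of the setup, since conjugation $Q \mapsto U^T Q U$ preserves both the Frobenius norm and each stratum $\sC(\Delta)^w$. Writing the spectral decomposition $A = C^T \Lambda C$ with $\Lambda = \txt{diag}(\lambda_1, \ldots, \lambda_n)$ having distinct generic entries, it suffices to analyze critical points of $\txt{d}_\Lambda$ on $\sC(\Delta)^w$ and transport them back by conjugation with $C^T$. Realizing $\sC(\Delta)^w$ as the $O(n)$-orbit of $\txt{Diag}(n,\R)^w$ via the map $(U, \tilde{\Lambda}) \mapsto U^T \tilde{\Lambda} U$ and differentiating at $(I, \tilde{\Lambda})$ yields the tangent space decomposition
\begin{equation*}
T_{\tilde{\Lambda}} \sC(\Delta)^w = \bigl\{[\tilde{\Lambda}, X] + D \,:\, X \in \R^{n \times n} \text{ skew-symmetric},\ D \in T_{\tilde{\Lambda}} \overline{\txt{Diag}(n,\R)^w}\bigr\}.
\end{equation*}
The first summand records the ``horizontal'' directions coming from infinitesimal conjugations, while the second records ``vertical'' variations of the eigenvalues preserving the multiplicity pattern.

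Let $Q \in \sC(\Delta)^w$ be a critical point of $\txt{d}_\Lambda$. Applying the cyclic property of the trace to the horizontal orthogonality condition gives
\begin{equation*}
\langle \Lambda - Q,\, [X, Q] \rangle \;=\; \langle [Q, \Lambda],\, X \rangle \;=\; 0 \qquad \text{for every skew-symmetric } X,
\end{equation*}
and since $[Q, \Lambda]$ is itself skew-symmetric, this forces $[Q, \Lambda] = 0$. Because $\Lambda$ has simple spectrum, $Q = \tilde{\Lambda}$ must be diagonal. The vertical condition $\langle \Lambda - \tilde{\Lambda}, D\rangle = 0$ for all $D \in T_{\tilde{\Lambda}} \overline{\txt{Diag}(n,\R)^w}$ then characterizes $\tilde{\Lambda}$ as the orthogonal projection of $\Lambda$ onto the plane of the arrangement containing it, and conversely every such projection is a critical point. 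This establishes part (1) and, combined with the count of planes in the arrangement $\overline{\txt{Diag}(n,\R)^w}$ recorded just before the statement of the theorem together with the fact that for generic $\Lambda$ each such projection lies in the open locus $\txt{Diag}(n,\R)^w$ (not in a deeper stratum), yields the claimed number of critical points in part (2).

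For the global minimum, the function $\txt{d}_\Lambda^2$ is proper on the Euclidean closure $\overline{\sC(\Delta)^w}$, so its infimum is attained there. By the stratum-by-stratum first-order analysis, this infimum is realized as the orthogonal projection of $\Lambda$ onto a plane belonging either to $\overline{\txt{Diag}(n,\R)^w}$ or to a deeper sub-arrangement associated with some $w'$ refining $w$. However, every plane of a deeper sub-arrangement is contained in a plane of $\overline{\txt{Diag}(n,\R)^w}$, so the minimal distance is already achieved by a projection onto a top-dimensional plane of $\overline{\txt{Diag}(n,\R)^w}$. For generic $\Lambda$, the finitely many projection distances are pairwise distinct and the minimizing projection lies in the open stratum $\txt{Diag}(n,\R)^w$, so the global minimum of $\txt{d}_\Lambda$ on $\sC(\Delta)^w$ exists and coincides with a unique critical point. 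The principal technicality to check will be exactly these two genericity statements -- that the closest projection avoids deeper strata, and that the finitely many projection distances are distinct -- each of which is the complement of a proper semialgebraic subset and hence holds on a Zariski-open dense set of symmetric matrices $A$.
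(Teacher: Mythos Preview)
Your argument is correct and is in fact more self-contained than the paper's. The paper's proof of this theorem is essentially a one-line citation: it invokes \cite[Thm.~3 and Sec.~3]{BD2017} to conclude that any real critical point of $\txt{d}_A$ on $\sC(\Delta)^w$ must be of the form $C^T\tilde\Lambda C$ with $\tilde\Lambda$ a critical point of the restricted problem on $\overline{\txt{Diag}(n,\R)^w}$, and then observes that the latter is a plane arrangement, so its critical points are the orthogonal projections.

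You instead reprove the relevant special case of that cited result directly via the commutator computation: the orbit tangent vectors $[X,Q]$ lie in $T_Q\sC(\Delta)^w$ at every point by $O(n)$-invariance, and the identity $\langle \Lambda-Q,[X,Q]\rangle=\langle [Q,\Lambda],X\rangle$ forces $[Q,\Lambda]=0$, hence $Q$ diagonal since $\Lambda$ has simple spectrum. This is the standard mechanism behind the result in \cite{BD2017}, specialized to the symmetric/orthogonal setting. Your treatment of the global minimum (properness on the closure, then comparing distances to nested planes in the arrangement to push the minimizer back to the open stratum) is likewise more explicit than the paper, which simply asserts that one of the projections is the unique closest point. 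One small expositional point: you state the tangent-space decomposition only at a diagonal point $\tilde\Lambda$ but then use the horizontal part at a general $Q$; it would read more cleanly to note up front that $\{[X,Q]:X\ \text{skew}\}\subset T_Q\sC(\Delta)^w$ holds at \emph{every} $Q$ by invariance, before specializing to diagonal points for the vertical directions.
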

\begin{rem}
Note that the manifold $\sC(\Delta)^w$ is not compact and thus the function \mbox{$\txt{d}_A: \sC(\Delta)^w \rightarrow \R$} might not a priori have a minimum.
\end{rem}
\subsection{Euclidean Distance Degree}\label{sec:ED}
Let $X\subset \R^m$ be a real algebraic variety and let $X^\C\subset \C^m$ denote its Zariski closure. The number \mbox{$\#\{x\in X_{\txt{sm}}: u-x\perp T_x X_{\txt{sm}}\}$} of critical points of the distance to the smooth locus $X_{\txt{sm}}$ of $X$ from a generic point $u\in \R^m$ can be estimated by the number $\txt{EDdeg}(X):=\#\{x\in X_{\txt{sm}}^{\C} : u-x \perp T_x X_{\txt{sm}}^{\C}\}$ of ``complex critical points''. Here, $v\perp w$ is orthogonality with respect to the bilinear form $(v,w)\mapsto v^Tw$.  The quantity $\txt{EDdeg}(X)$ does not depend on the choice of the generic point $u\in \R^m$ and it's called \emph{the Euclidean distance degree} of $X$ \cite{EDD}. Also, solutions $x\in X^{\C}_{\txt{sm}}$ to $u-x\perp T_xX_{\txt{sm}}^{\C}$ are called \emph{ED critical points of $u$ with respect to $X$} \cite{DLOT}. In the following theorem we compute the Euclidean distance degree of the variety $\sC(\Delta)\subset \txt{Sym}(n,\R)$ and show that all ED critical points are actually real (this result is an analogue of \cite[Cor. 5.1]{DLOT} for the space of symmetric matrices and the variety $\sC(\Delta)$).
\begin{thm}\label{thm:EDdegree}
Let $A\in \txt{Sym}(n,\R)$ be a sufficiently generic symmetric matrix. Then the ${n \choose 2}$ real critical points of $\txt{d}_A: \sC(\Delta_{\txt{sm}}) \rightarrow \R$ from \cref{cor:EYM} are the only ED critical points of $A$ with respect to $\sC(\Delta)$ and the Euclidean distance degree of $\sC(\Delta)$ equals $\txt{EDdeg}(\sC(\Delta)) = {n \choose 2}$.
\end{thm}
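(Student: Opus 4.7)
The plan is to exploit the conjugation action of $O(n,\C)$ on $\sC(\Delta)^{\C}$ to reduce the problem to a diagonal configuration, then split the criticality condition into its orbit and diagonal components. Writing $A=C^T\Lambda C$ with $C\in O(n,\R)\subset O(n,\C)$, the linear map $B\mapsto CBC^T$ is an isometry of $\txt{Sym}(n,\C)$ with respect to the bilinear form $(P,Q)\mapsto \txt{tr}(PQ)$ and preserves both $\sC(\Delta)^{\C}$ and its smooth locus. It therefore suffices to treat $A=\Lambda=\txt{diag}(\lambda_1,\ldots,\lambda_n)$ with the $\lambda_i$ generic real.

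\textbf{Step 2 (Orbit normality forces $B$ to be diagonal).} Let $B\in \sC(\Delta_{\txt{sm}})^{\C}$ be any complex ED critical point of $\Lambda$. Every $X\in\mathfrak{so}(n,\C)$ yields a tangent vector $[B,X]\in \Tang{B}{\sC(\Delta_{\txt{sm}})^{\C}}$ via the infinitesimal conjugation action, so the criticality equation reads $\txt{tr}\bigl((\Lambda-B)[B,X]\bigr)=0$. Cyclicity of the trace rewrites this as $\txt{tr}\bigl([\Lambda,B]\,X\bigr)=0$ for every antisymmetric $X$. Since $[\Lambda,B]$ is the commutator of two symmetric matrices it is itself antisymmetric, and being orthogonal to every antisymmetric matrix it must vanish. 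Thus $B$ commutes with $\Lambda$, and the genericity of the $\lambda_i$ forces $B$ to be diagonal.

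\textbf{Step 3 (Diagonal analysis and conclusion).} Write $B=\txt{diag}(b_1,\ldots,b_n)$. Belonging to the smooth locus of $\sC(\Delta)^{\C}$ requires exactly one coincidence $b_i=b_j$. Diagonal perturbations of $B$ satisfying the single linear constraint $c_i=c_j$ lie in $\Tang{B}{\sC(\Delta_{\txt{sm}})^{\C}}$, and normality against them yields $b_k=\lambda_k$ for $k\notin\{i,j\}$ together with $b_i+b_j=\lambda_i+\lambda_j$. Combined with $b_i=b_j$, this forces $b_i=b_j=(\lambda_i+\lambda_j)/2$, i.e.\ $B=\Lambda_{i,j}$ in the notation of \cref{cor:EYM}. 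For generic $\Lambda$ each of the $\binom{n}{2}$ matrices $\Lambda_{i,j}$ is a smooth point of $\sC(\Delta)^{\C}$, and \cref{cor:EYM} already asserts that each is a genuine critical point of $\txt{d}_A$. Hence the complex ED critical points of $A$ with respect to $\sC(\Delta)$ are exactly the $\binom{n}{2}$ real critical points of \cref{cor:EYM}, which gives $\txt{EDdeg}(\sC(\Delta))=\binom{n}{2}$ together with the claimed reality. The delicate point is ensuring that the orbit directions of Step~2 combined with the diagonal stratum directions of Step~3 capture the full normal condition on $\Lambda-B$; this rests on the codimension-one nature of the hypersurface $\sC(\Delta)^{\C}\subset \txt{Sym}(n,\C)$ and on the genericity of $\Lambda$, which also keeps every $\Lambda_{i,j}$ out of the deeper strata.
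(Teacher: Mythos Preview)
Your argument is essentially correct and takes a more hands-on route than the paper's. Where the paper invokes the main theorem of \cite{BD2017} as a black box to reduce $\txt{EDdeg}(\sC(\Delta)^{\C})$ to $\txt{EDdeg}(\sC(\Delta)^{\C}\cap\txt{Diag}(n,\C))$, you effectively reprove that reduction directly: the commutator identity $\txt{tr}\bigl((\Lambda-B)[B,X]\bigr)=\txt{tr}\bigl([\Lambda,B]X\bigr)$ forcing $[\Lambda,B]=0$ is exactly the mechanism underlying that result for the conjugation action. Your approach is thus more elementary and self-contained; the paper's is more modular and extends without modification to the deeper strata (as in Remark~\ref{rem:ED}).

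Two points to tighten. First, Step~2 uses that $[B,X]$ is tangent to $\sC(\Delta_{\txt{sm}})^{\C}$ for every $X\in\mathfrak{so}(n,\C)$, which needs the $O(n,\C)$-invariance of the Zariski closure $\sC(\Delta)^{\C}$; you assert this implicitly but never justify it. The paper handles it by the standard argument that $O(n,\R)$ is Zariski-dense in $O(n,\C)$, so invariance under the former propagates to the latter. Second, your closing remark that $\sC(\Delta)^{\C}$ is a codimension-one hypersurface is wrong: it has codimension two (already for $n=2$ it is the complex line of scalar matrices, not the quadric $\{\txt{disc}=0\}$). Fortunately this does not harm your logic, since Steps~2--3 only establish the \emph{upper} bound---any complex ED critical point must be one of the $\Lambda_{i,j}$---and \cref{cor:EYM} supplies the matching lower bound. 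In fact the $\binom{n}{2}-1$ independent orbit directions together with the $n-1$ constrained diagonal directions span precisely the $(N-2)$-dimensional tangent space $T_{\Lambda_{i,j}}\sC(\Delta_{\txt{sm}})^{\C}$, so nothing is missing; you just had the codimension wrong in your closing caveat.
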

\begin{rem}\label{rem:ED}
An analogous result holds for the Zariski closure of any other stratum of $\sC(\Delta)^w$. Namely, $\txt{EDdeg}(\sc(\Delta)^w)=\frac{n!}{1!^{w_1}2!^{w_2}\dots }$ and for a generic real matrix $A\in \txt{Sym}(n,\HR)$ ED critical points are real and given in Theorem \ref{thm:EYM}.
\end{rem}
\subsection{Random matrix theory}\label{sec:RMT}
The proof of Theorem \ref{thm:vol} eventually arrives at equation \eqref{almost_the_volume2}, which reduces our study to the evaluation of a special integral over the \emph{Gaussian Orthogonal ensemble} ($\mathrm{GOE}$) \cite{mehta,Tao:12}. The connection between the volume of $\Delta$ and random symmetric matrices comes from the fact that, in a sense, the geometry in the Euclidean space of symmetric matrices with the Frobenius norm and the random $\mathrm{GOE}$ matrix model can be seen as the same object under two different points of view.

The integral in \eqref{almost_the_volume2} is the second moment of the characteristic polynomial of a $\mathrm{GOE}$ matrix. In \cite{mehta} Mehta gives a general formula for all moments of the characteristic polynomial of a $\mathrm{GOE}$ matrix. However, we were unable to locate an exact evaluation of the formula for the second moment in the literature. For this reason we added \cref{thm_char_poly}, in which we compute the second moment, to this article. We use it in \cref{sec:RMTproofs} to prove the following theorem.
\begin{thm}\label{thm:charpol}
For a fixed positive integer $k$ we have
  $$\int_{u\in\HR}\,\mean_{Q\sim \mathrm{GOE}(k)} [\det(Q-u\mathbbm{1})^2] \,e^{-u^2}\, \d u = \sqrt{\pi} \,\frac{(k+2)!}{2^{k+1}}.$$
\end{thm}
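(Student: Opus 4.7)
The plan is to reduce the double integral to a combinatorial sum via the Weyl integration formula and Hermite orthogonality, then collapse the sum. Since $\det(Q-u\mathbbm{1})^2 = \prod_i(\lambda_i(Q)-u)^2$ is orthogonally invariant, the GOE expectation rewrites as an integral against the joint eigenvalue density $\frac{1}{Z_k}\prod_{i<j}|\lambda_i-\lambda_j|\,e^{-\sum \lambda_i^2/2}$; Fubini's theorem then exchanges the $u$- and $\lambda$-integrations, reducing the problem to computing the $\lambda$-average of $G(\lambda) := \int_{\mathbb{R}}\prod_i(u-\lambda_i)^2\,e^{-u^2}\,du$.

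Next, I would evaluate $G(\lambda)$ by expanding the monic degree-$k$ polynomial $p_\lambda(u):=\prod_i(u-\lambda_i)$ in the basis $\{H^*_n\}_{n\ge 0}$ of monic Hermite polynomials orthogonal with respect to $e^{-u^2}$ on $\mathbb{R}$. Writing $p_\lambda(u)=\sum_{n=0}^{k} a_n(\lambda)\,H^*_n(u)$ with $a_k=1$, Parseval's identity together with the standard norm $\int (H^*_n)^2 e^{-u^2}du = \sqrt{\pi}\,n!/2^n$ gives $G(\lambda)=\sqrt{\pi}\sum_n (n!/2^n)\,a_n(\lambda)^2$, so the left-hand side of the theorem equals $\sqrt{\pi}\sum_{n=0}^{k}(n!/2^n)\,\mathbb{E}_Q[a_n(Q)^2]$. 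The hard part---and the main obstacle---is to obtain the closed form
$$\mathbb{E}_{Q\sim\mathrm{GOE}(k)}[a_n(Q)^2]\;=\;\binom{k}{k-n}\frac{(k-n+1)!}{2^{k-n}},\qquad 0\le n\le k.$$
My approach to this would be to combine the principal-minor expansion $\det(u\mathbbm{1}-Q)=\sum_j(-1)^j c_j(Q)\,u^{k-j}$ (with $c_j(Q)=\sum_{|S|=j}\det Q_S$) with the Hermite inversion $u^m=\sum_l \binom{m}{2l}\frac{(2l)!}{4^l l!}\,H^*_{m-2l}(u)$, thereby expressing each $a_n(Q)$ as an explicit linear combination of the $c_j(Q)$. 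Parity of $a_n$ under $Q\mapsto -Q$ (namely $a_n(-Q)=(-1)^{k-n}a_n(Q)$) already kills the cross-moments $\mathbb{E}[a_n a_m]$ with $n+m$ odd, while the remaining correlators $\mathbb{E}_Q[c_j c_{j'}]$ are accessible via Wick's theorem together with the classical identity $\mathbb{E}_{Q'\sim\mathrm{GOE}(|S|)}[\det(t\mathbbm{1}-Q')]=H^*_{|S|}(t)$ and the independence of principal submatrices on disjoint index sets.

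Granted the closed form, substituting $m=k-n$ collapses the sum:
$$\sqrt{\pi}\sum_{n=0}^{k}\frac{n!}{2^n}\binom{k}{k-n}\frac{(k-n+1)!}{2^{k-n}}=\frac{\sqrt{\pi}\,k!}{2^k}\sum_{m=0}^{k}(m+1)=\frac{\sqrt{\pi}\,k!\,(k+1)(k+2)}{2^{k+1}}=\sqrt{\pi}\,\frac{(k+2)!}{2^{k+1}},$$
which is the identity of Theorem~\ref{thm:charpol}. As sanity checks, $k=1$ gives $\int(1+u^2)e^{-u^2}du=3\sqrt{\pi}/2=\sqrt{\pi}\cdot 3!/4$, and $k=2$ gives $\int (u^4+u^2+7/4)e^{-u^2}du=3\sqrt{\pi}=\sqrt{\pi}\cdot 4!/8$, both matching the claimed formula.
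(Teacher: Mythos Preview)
Your strategy is genuinely different from the paper's. The paper quotes Mehta's determinantal formula for $\mean_{\mathrm{GOE}(k)}[\det(Q-u\mathbbm{1})^2]$ (a sum of $2\times 2$ or $3\times 3$ determinants in Hermite polynomials, with separate formulas for even and odd $k$) and then integrates each term against $e^{-u^2}$ using Hermite orthogonality; the structural input is Mehta's formula itself, proved via skew--orthogonal polynomials and Pfaffians. You instead expand the random characteristic polynomial in the monic Hermite basis and apply Parseval. The reduction to $\sqrt{\pi}\sum_{n}(n!/2^{n})\,\mean[a_n(Q)^2]$ is correct, and \emph{granting} the closed form $\mean[a_n(Q)^2]=\binom{k}{n}(k-n+1)!/2^{k-n}$ your final collapse to $\sqrt{\pi}\,(k+2)!/2^{k+1}$ is clean and right; the sanity checks at $k=1,2$ are fine as well.

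The gap is precisely where you flag it: the identity for $\mean[a_n(Q)^2]$ is stated, not proved, and your outline does not close it. Two concrete problems. First, the parity remark about $\mean[a_na_m]$ with $n+m$ odd does no work---Parseval is already diagonal, so no cross-moments appear in the first place. Second, ``independence of principal submatrices on disjoint index sets'' handles only the trivial contributions to $\mean[c_jc_{j'}]=\sum_{|S|=j,\,|S'|=j'}\mean[\det Q_S\det Q_{S'}]$; the substantive terms come from overlapping $S,S'$, and neither Wick's theorem alone nor the first-moment identity $\mean[\det(t\mathbbm{1}-Q')]=H^*_{|S|}(t)$ resolves those without a significant further argument. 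Even if one had a full table of $\mean[c_jc_{j'}]$, recombining them through the Hermite inversion coefficients into the claimed closed form is itself a nontrivial combinatorial identity that you have not addressed. So as written, the proposal replaces the paper's black box (Mehta's formula) with another black box (your moment identity) whose proof is only gestured at; the paper's route, by contrast, is complete once Mehta is cited, requiring only elementary orthogonality and a short arithmetic sum thereafter.
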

An interesting remark in this direction is that some geometric properties of $\Delta$ can be stated using the language of Random Matrix Theory. For instance, the estimate on the volume of a tube around $\Delta$ allows to estimate the probability that two eigenvalues of a GOE(n) matrix are close: for $\epsilon>0$ small enough
\begin{equation}\label{eq:close} \mathbb{P}\{\mathrm{min}_{i\neq j}\,|\lambda_i(Q)-\lambda_j(Q)|\leq \epsilon\}\leq \frac{1}{4}{n\choose 2}\epsilon^2+O(\epsilon^3).\end{equation}
The interest of this estimate is that it provides a \emph{non-asymptotic} (as opposed to studies in the limit $n\to \infty$, \cite{tao, BenArous}) result in random matrix theory. It would be interesting to provide an estimate of the implied constant in \eqref{eq:close}, however this might be difficult using our approach as it probably involves estimating higher curvature integrals of $\Delta$.

\section{Critical points of the distance to the discriminant}\label{sec:proof1}

In this section we prove Theorems \ref{cor:EYM}, \ref{thm:EYM} and \ref{thm:EDdegree}. Since Theorem \ref{cor:EYM} is a special case of Theorem~\ref{thm:EYM}, we start by proving the latter.
\subsection{Proof of Theorem \ref{thm:EYM}}
Let's denote by $\overline{\sC(\Delta)^w}\subset \txt{Sym}(n,\R)$ the Euclidean closure of~$\sC(\Delta)^w$. Note that $\overline{\sC(\Delta)^w}$ is a (real) algebraic variety, the smooth locus of $\overline{\sC(\Delta)^w}$ is $\sC(\Delta)^w$ and the boundary $\overline{\sC(\Delta)^w}\setminus \sC(\Delta)^w$ is a union of some strata $\sC(\Delta)^{w^\prime}$ of greater codimension.

Let now $A\in \txt{Sym}(n,\R)$ be a sufficiently generic symmetric matrix and let $A=C^T\Lambda C$ be its spectral decomposition.
From \cite[Thm. 3]{BD2017} and \cite[Sec. 3]{BD2017} it follows that any real ED critical point of $\overline{\sC(\Delta)^w}$ with respect to $A=C^T\Lambda C$ is of the form $C^T\tilde{\Lambda}C$, where the diagonal matrix $\tilde{\Lambda}\in \txt{Diag}(n,\R)^w$ is a ED critical point of $\overline{\txt{Diag}(n,\R)^w}$ with respect to $\Lambda \in \txt{Diag}(n,\R)$.
Since, as observed above $\overline{\txt{Diag}(n,\R)^w}$, is an arrangement of $\frac{n!}{1!^{w_1}2!^{w_2}3!^{w_3}\dots}$ planes its ED critical points with respect to a generic $\Lambda\in \txt{Diag}(n,\R)$ are the orthogonal projections of $\Lambda$ on the components of the plane arrangement. One of these ED critical points is the (unique) closest point on $\overline{\txt{Diag}(n,\R)^w}$ to the generic $\Lambda\in \txt{Diag}(n,\R)$. Both claims follow. \qed
\subsection{Proof of Theorem \ref{cor:EYM}}
Let $w=(n-2,1,0,\dots,0)$ and let's for a given symmetric matrix $A\in \txt{Sym}(n,\HR)$ fix a spectral decomposition $A=C^T\Lambda C, \Lambda=\txt{diag}(\lambda_1,\dots,\lambda_n)$. From Theorem~\ref{thm:EYM} we know that the critical points of the distance function $\txt{d}_A: \sC(\Delta_{\txt{sm}})\setminus \{0\} \rightarrow \R$ are of the form $C^T\Lambda_{i,j}C$, $1\leq i<j\leq n$,
where $\Lambda_{i,j}$  is the orthogonal projection of $\Lambda$ onto the hyperplane $\{\lambda_i=\lambda_j\}\subset \overline{\txt{Diag}(n,\R)^w}$. It is straightforward to check that
$$\Lambda_{i,j} = \txt{diag}\left(\lambda_1, \dots, \underset{i}{\frac{\lambda_i+\lambda_j}{2}}, \dots, \underset{j}{\frac{\lambda_i+\lambda_j}{2}},\dots,\lambda_n\right).$$
From this, it is immediate that the distance between $\Lambda$ and $\Lambda_{i,j}$ equals
\begin{equation*}
\Vert \Lambda-\Lambda_{i,j}\Vert = \sqrt{\txt{tr}\left(\left(\Lambda-\Lambda_{i,j}\right)^2\right)} = \frac{|\lambda_i-\lambda_j|}{\sqrt{2}}
\end{equation*}
This finishes the proof.
\qed

\subsection{Proof of Theorem \ref{thm:EDdegree}}
In the proof of \cref{cor:EYM} we showed that there are $\tbinom{n}{2}$ real ED critical points of the distance function from a general real symmetric matrix $A$ to $\sC(\Delta)$. In this subsection we in particular argue that there are no other (complex) ED critical points in this case.

Let $X^{\HC}\subset \txt{Sym}(n,\HC)$ be the Zariski closure of $\sC(\Delta)\subset \txt{Sym}(n,\HR)$. The orthogonal group $G=O(n)=\{C\in M(n,\HR) : C^TC=\mathbbm{1}\}$ and the complex orthogonal group $G^{\HC}=\{C\in M(n,\HC) : C^T C= \mathbbm{1}\}$ act by conjugation on $\txt{Sym}(n,\HR)$ and on $\txt{Sym}(n,\HC)$ respectively.
Since $\sC(\Delta)\subset \txt{Sym}(n,\HR)$ is $G$-invariant, by \cite[Lemma 2.1]{DLOT}, the complex variety $X^{\HC}\subset \txt{Sym}(n,\HC)$ is also $G$-invariant. Using the same argument as in \cite[Thm. 2.2]{DLOT} we now show that $X^{\HC}$ is actually $G^{\HC}$-invariant. Indeed, for a fixed point $A\in X^{\HC}$ the map
\begin{equation*}
\gamma_A: G^{\HC} \rightarrow \txt{Sym}(n,\HC),\quad C \mapsto C^T A C
\end{equation*}
is continuous and hence the set $\gamma_A^{-1}(X^{\HC})\subset G^{\HC}$ is closed. Since by the above $G\subset \gamma_A^{-1}(X^{\HC})$ and since $G^{\HC}$ is the Zariski closure of $G$ we must have $\gamma_A^{-1}(X^{\HC}) = G^{\HC}$.

Let's denote by $\txt{Diag}(n,\HC)\subset \txt{Sym}(n,\HC)$ the space of complex diagonal matrices. For any matrix $D\in \txt{Diag}(n,\HC)$ with pairwise distinct diagonal entries the tangent space at $D$ to the orbit~$G^{\HC}D = \{CD: C\in G^{\HC}\}$ consists of complex symmetric matrices with zeros on the diagonal:
\begin{equation*}
T_D(G^{\HC} D) = \{ vD-Dv: v^T+v=0\} = \{A\in \txt{Sym}(n,\HC): a_{11} =\dots = a_{nn}= 0\}
\end{equation*}
In particular,
\begin{equation*}
T_A\txt{Sym}(n,\HC) = T_A\txt{Diag}(n,\HC)+T_D(G^{\HC}D)
\end{equation*}
is the direct sum which is orthogonal with respect to the bilinear form $(A_1,A_2)\mapsto\txt{tr}(A_1^TA_2)$.

As any real symmetric matrix can be diagonalized by some orthogonal matrix we have $$\sC(\Delta)=G(\sC(\Delta)\cap \txt{Diag}(n,\HR)) = \{CA: C\in G, A\in \sC(\Delta)\cap \txt{Diag}(n,\HR)\}.$$
This, together with the inclusion $\sC(\Delta) \subset G^{\HC}(X^{\HC}\cap \txt{Diag}(n,\HC))$, imply that  \mbox{$G^{\HC}(X^{\HC} \cap \txt{Diag}(n,\HC))$} is Zariski dense in $X^{\HC}$. We now apply the main theorem from \cite{BD2017} to obtain that the ED degree of $X^{\HC}$ in $\txt{Sym}(n,\HC)$ equals the ED degree of \mbox{$X^{\HC}\cap \txt{Diag}(n,\HC)$} in $\txt{Diag}(n,\HC)$. Since $X^{\HC}\cap \txt{Diag}(n,\HC)= \{D\in \txt{Diag}(n,\HC): D_i=D_j, i\neq j\}$ is the union of ${n \choose 2}$ hyperplanes the ED critical points of a generic $\tilde{D} \in \txt{Diag}(n,\HC)$ are orthogonal projections from $\tilde{D}$ to each of the hyperplanes (as in the proof of Theorem \ref{cor:EYM}). In particular $\txt{EDdeg}(X^{\HC}) = \txt{EDdeg}(X^{\HC}\cap \txt{Diag}(n,\HC)) = {n\choose 2}$ and if $\tilde D\in \txt{Diag}(n,\HR)$ is a generic real diagonal matrix ED critical points are all real. Finally, for a general symmetric matrix $A=C^T\tilde D C$ all ED critical points are obtained from the ones for $\tilde{D}\in \txt{Diag}(n,\HR)$ via conjugation by $C\in O(n)$.

The proof of the statement in \cref{rem:ED} is similar. Each plane in the plane arrangement $\overline{\txt{Diag}(n,\R)^w}$ yields one critical point and there are  $\frac{n!}{1!^{w_1}2!^{w_2}3!^{w_3}\dots}$ many such planes.
\qed

\section{The volume of the discriminant}
The goal of this section is to prove Theorem \ref{thm:vol} and Theorem \ref{prop:family}. As was mentioned in the introduction, we reduce the computation of the volume to an integral over the $\mathrm{GOE}$-ensemble. This is why, before starting the proof, in the next subsection we recall some preliminary concepts and facts from random matrix theory that will be used in the sequel.
\subsection{The $\textrm{GOE}(n)$ model for random matrices }
The material we present here is from \cite{mehta}.

The \emph{$\txt{GOE}(n)$ probability measure} of any Lebesgue measurable subset $U\subset \txt{Sym}(n,\R)$ is defined as follows:
\begin{align*}
  \mathbb{P}\{U\} = \frac{1}{\sqrt{2}^n\sqrt{\pi}^{N}} \int_U e^{-\frac{\Vert A\Vert^2}{2}}\, dA,
\end{align*}
where $dA=\prod_{1\leq i\leq j\leq n} dA_{ij}$ is the Lebesgue measure on the space of symmetric matrices $\txt{Sym}(n,\R)$ and, as before, $\Vert A\Vert = \sqrt{ \txt{tr}(A^2)}$ is the Frobenius norm.

By \cite[Sec. 3.1]{mehta}, the joint density of the eigenvalues of a $\mathrm{GOE}(n)$ matrix $A$ is given by the measure $
  \tfrac{1}{Z_n} \int_V e^{-\frac{\Vert \lambda\Vert^2}{2}} |\Delta(\lambda)|\, d\lambda,$
where $d\lambda=\prod_{i=1}^n d\lambda_i$ is the Lebesgue measure on $\R^n$, $V\subset \R^n$ is a measurable subset, $\Vert \lambda\Vert^2 = \lambda_1^2+\dots+\lambda_n^2$ is the Euclidean norm, $\Delta(\lambda):=\prod_{1\leq i<j\leq n}(\lambda_j-\lambda_i)$ is the Vandermonde determinant and $Z_n$ is the normalization constant whose value is given by the formula
\begin{align}\label{selberg}
  Z_n = \int_{\R^n}  e^{-\frac{\Vert\lambda\Vert^2}{2}} |\Delta(\lambda)| \,\d\lambda =\sqrt{2\pi}^{\,n} \prod_{i=1}^n \frac{\Gamma(1+\tfrac{i}{2})}{\Gamma(\tfrac{3}{2})},
\end{align}
see \cite[Eq. (17.6.7)]{mehta} with $\gamma=a=\tfrac{1}{2}$. In particular, for an integrable function $f:\txt{Sym}(n,\R) \to \R$ that depends only on the eigenvalues of $A\in \txt{Sym}(n,\R)$, the following identity holds
\begin{equation}\label{eigenvalue_density}
\mean_{A\sim \mathrm{GOE}(n)} f(A) = \frac{1}{Z_n} \int_V f(\lambda_1,\ldots,\lambda_n)\,e^{-\frac{\Vert \lambda\Vert^2}{2}} |\Delta(\lambda)|\, d\lambda.
\end{equation}


\subsection{Proof of Theorem \ref{thm:vol}}

In what follows we endow the orthogonal group $O(n)$ with the left-invariant metric defined on the Lie algebra $T_{\mathbbm{1}}O(n)$ by
\begin{equation*}
\langle u,v\rangle = \frac{1}{2}\txt{tr}(u^Tv),\ u,v \in T_{\mathbbm{1}}O(n)
\end{equation*}
The following formula for the volume of $O(n)$ can be found in \cite[Corollary 2.1.16]{Muirhead}:
\begin{align}\label{eq:volume of O(n)}
\vert O(n) \vert = \frac{2^n \pi^\frac{n(n+1)}{4}}{\prod_{i=1}^{n} \Gamma\left(\frac{i}{2}\right)}
\end{align}
Recall that by definition the volume of $\Delta$ equals the volume of the smooth part $\Delta_\mathrm{sm}\subset S^{N-1}$ that consists of symmetric matrices of unit norm with exactly two repeated eigenvalues. Let's denote by $(S^{n-2})_*$ the dense open subset of the $(n-2)$-sphere consisting of points with pairwise distinct coordinates.  We consider the following parametrization of $\Delta_{\txt{sm}}\subset S^{N-1}$:
\begin{equation*}
p : O(n) \times (S^{n-2})_* \rightarrow \Delta_\mathrm{sm},\quad (C, \mu)\mapsto C^T \,\mathrm{diag}(\lambda_1,\ldots,\lambda_n)\,C,
\end{equation*}
where $\lambda_1,\ldots,\lambda_n$ are defined as
\begin{equation}\label{parametrizing_eigenvalues}
  \lambda_1 = \mu_1,\;  \lambda_2 = \mu_2,\;\ldots, \;\lambda_{n-2} = \mu_{n-2}, \;\lambda_{n-1} = \frac{\mu_{n-1}}{\sqrt{2}},\;\lambda_{n} = \frac{\mu_{n-1}}{\sqrt{2}}.
\end{equation}
In Lemma \ref{lem_normal_jacobian} below we show that $p$ is a submersion. Applying to it the smooth coarea formula (see, e.g., \cite[Theorem 17.8]{BuCu}) we have
\begin{equation}\label{almost_the_volume}
    \int_{A\in\Delta_{\txt{sm}}}\,\vert p^{-1}(A)\vert\,\d A
    =
  \int_{(C,\mu)\in O(n)\times (S^{n-2})_*}\,\mathrm{NJ}_{(C,\mu)}p\, \d (C,\mu)
\end{equation}
Here $\mathrm{NJ}_{(C,\mu)}p$ denotes the \emph{normal Jacobian} of $p$ at $(C,\mu)$ and we compute its value in the following lemma.
\begin{lemma}\label{lem_normal_jacobian}
The parametrization $p: O(n)\times (S^{n-2})_* \rightarrow \Delta_{\txt{sm}}$ is a submersion and its normal Jacobian at $(C,\mu)\in O(n)\times (S^{n-2})_*$ is given by the formula
 $$\mathrm{NJ}_{(C,\mu)}p = \sqrt{2}^{\frac{n(n-1)}{2}-1}\prod_{1\leq i<j\leq n-2} \vert\mu_i-\mu_j\vert\, \prod_{i=1}^{n-2}\left\vert\mu_i-\frac{\mu_{n-1}}{\sqrt{2}}\right\vert^2.$$
\end{lemma}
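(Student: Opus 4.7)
The plan is to compute $dp_{(C,\mu)}$ explicitly, use the invariance of the Frobenius metric under conjugation by orthogonal matrices to reduce the computation to a model linear map at $\Lambda=\txt{diag}(\lambda_1,\dots,\lambda_n)$, and then read off the singular values in adapted orthonormal bases. Concretely, I would use the left-invariant trivialization of $TO(n)$ to write $\dot C = Cv$ with $v$ skew-symmetric; differentiating $p(C,\mu)=C^T\Lambda C$ gives
\begin{equation*}
dp_{(C,\mu)}(\dot C,\dot\mu) = C^T\dot\Lambda\,C+\bigl[C^T\Lambda C,\,v\bigr].
\end{equation*}
Because $\Phi_C(A):=CAC^T$ is an isometry of $(\txt{Sym}(n,\R),\|\cdot\|)$ and $v\mapsto CvC^T$ is an isometry of the skew matrices with the metric $\tfrac12\txt{tr}(u^Tv)$, post-composing with $\Phi_C$ replaces $dp_{(C,\mu)}$ with the linear map
\begin{equation*}
L_\Lambda(\tilde v,\dot\mu):=\dot\Lambda+[\Lambda,\tilde v],
\end{equation*}
whose normal Jacobian coincides with that of $p$ at $(C,\mu)$.

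Next I would compute $L_\Lambda$ in the orthonormal basis $\{e_{ij}-e_{ji}\}_{i<j}$ of skew matrices (orthonormal with respect to $\tfrac12\txt{tr}(u^Tv)$) and in the orthonormal basis $\{e_{ii}\}\cup\{(e_{ij}+e_{ji})/\sqrt 2\}_{i<j}$ of $\txt{Sym}(n,\R)$. A direct calculation yields $[\Lambda,e_{ij}-e_{ji}]=(\lambda_i-\lambda_j)(e_{ij}+e_{ji})$, which has Frobenius norm $\sqrt 2\,|\lambda_i-\lambda_j|$; hence the skew block of $L_\Lambda$ is diagonal in these bases with singular values $\sqrt 2\,|\lambda_i-\lambda_j|$. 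For the $\dot\mu$-piece, the parametrization~\eqref{parametrizing_eigenvalues} makes $\dot\mu\mapsto\dot\Lambda$ an isometric embedding from $T_\mu S^{n-2}\subset\R^{n-1}$ into the diagonal matrices, since $2\cdot(\dot\mu_{n-1}/\sqrt 2)^2 = \dot\mu_{n-1}^2$. The diagonal and off-diagonal subspaces of $\txt{Sym}(n,\R)$ being Frobenius-orthogonal, the two contributions combine multiplicatively.

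Finally, the kernel of $L_\Lambda$ is one-dimensional: $\dot\Lambda=0$ forces $\dot\mu=0$ by the isometric embedding, and $[\Lambda,\tilde v]=0$ combined with $\lambda_{n-1}=\lambda_n$ leaves only the direction $e_{n-1,n}-e_{n,n-1}$. Since $\dim(O(n)\times S^{n-2})=\binom n 2+(n-2)=N-2$ exceeds $\dim\Delta_{\txt{sm}}=N-3$ by exactly one, this shows $p$ is a submersion. Multiplying the remaining singular values gives
\begin{equation*}
\txt{NJ}_{(C,\mu)}p=\prod_{\substack{1\le i<j\le n\\(i,j)\ne(n-1,n)}}\sqrt 2\,|\lambda_i-\lambda_j| = \sqrt 2^{\binom n 2-1}\prod_{\substack{1\le i<j\le n\\(i,j)\ne(n-1,n)}}|\lambda_i-\lambda_j|,
\end{equation*}
and substituting the $\lambda$'s from~\eqref{parametrizing_eigenvalues} (noting that each $i\le n-2$ contributes $|\mu_i-\mu_{n-1}/\sqrt 2|$ twice, from $j=n-1$ and $j=n$) delivers the claimed formula with exponent $\tfrac{n(n-1)}2-1$. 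The main point to keep careful track of is the metric normalization: the $\sqrt 2$ produced by each commutator, the $1/\sqrt 2$ hidden in the parametrization, and the $\tfrac12$ in the metric on the skew matrices must conspire correctly, and this is where a sign or factor error would most easily sneak in.
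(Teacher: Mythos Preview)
Your argument is correct and follows essentially the same route as the paper's proof: both reduce, via orthogonal invariance, to computing the differential at a diagonal matrix, then read off the singular values $\sqrt{2}\,|\lambda_i-\lambda_j|$ from the commutator action on the orthonormal basis $\{e_{ij}-e_{ji}\}$ of skew matrices, observe that the $\dot\mu\mapsto\dot\Lambda$ piece is an isometry onto the diagonal part, and identify the one-dimensional kernel coming from $(i,j)=(n-1,n)$. The only cosmetic difference is that the paper invokes equivariance under the right $O(n)$-action to set $C=\mathbbm{1}$, whereas you use the left-invariant trivialization together with the conjugation isometry $\Phi_C$; these are equivalent reductions.
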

\begin{proof}
Recall that for a smooth submersion $f: M\rightarrow N$ between two Riemannian manifolds the normal Jacobian of $f$ at $x\in M$ is the absolute value of the determinant of the restriction of the differential $D_{x}f: T_x M \rightarrow T_{f(x)}N$ of $f$ at $x$ to the orthogonal complement of its kernel. We now show that the parametrization $p: O(n)\times (S^{n-2})_* \rightarrow \Delta_{\txt{sm}}$ is a submersion and compute its normal Jacobian.

Note that  $p$ is equivariant with respect to the right action of $O(n)$ on itself and its action on $\Delta_{\txt{sm}}$ via conjugation, i.e., for all $C, \tilde{C}\in O(n)$ and $\mu\in S^{n-2}$ we have $p(C\tilde{C},\mu) = \tilde{C}^Tp(C,\mu)\tilde{C}^T$. Therefore, $D_{(C,\mu)}p = C^TD_{(\mathbbm{1},\mu)}p\, C$ and, consequently, $\mathrm{NJ}_{(C,\mu)}p = \mathrm{NJ}_{(\mathbbm{1},\mu)}p$. We compute the latter.
The differential of $p$ at~$(I,\mu)$ is the map
  \begin{align*}
    D_{(\mathbbm{1},\mu)}p: T_{\mathbbm{1}} O(n) \times T_\mu S^{n-2} &\to T_{p(\mathbbm{1},\mu)} \Delta_\mathrm{sm},\\ (\dot{C},\dot{\mu}) &\mapsto \dot{C}^T \mathrm{diag}(\lambda_1,\ldots,\lambda_n) + \mathrm{diag}(\lambda_1,\ldots,\lambda_n)\,\dot{C} + \mathrm{diag}(\dot{\lambda}_1,\ldots,\dot{\lambda}_n),\nonumber
  \end{align*}
where $\dot{\lambda}_i = \dot{\mu}_i$ for $1\leq i\leq n-2$ and $\dot{\lambda}_{n-1} = \dot{\lambda}_n = \tfrac{\dot{\mu}_{n-1}}{\sqrt{2}}$. The Lie algebra $T_{\mathbbm{1}}O(n)$ consists of skew-symmetric matrices: $$T_{\mathbbm{1}} O(n) = \{\dot{C}\in \mathbb{R}^{n\times n} : \dot{C}^T = -\dot{C}\}.$$
Let $E_{i,j}$ be the matrix that has zeros everywhere except for the entry $(i,j)$ where it equals $1$. Then $\{E_{i,j} - E_{j,i} : 1\leq i < j\leq n\}$ is an orthonormal basis for $T_{\mathbbm{1}}O(n)$. One verifies that
  \begin{align*}
    D_{(\mathbbm{1},\mu)}p(E_{i,j} - E_{j,i},0) &= (\lambda_j-\lambda_i)(E_{i,j} + E_{j,i}),\quad\text{and}\quad
   D_{(\mathbbm{1},\mu)}p(0, \dot{\mu}) = \txt{diag}(\dot{\lambda}_1,\dots,\dot{\lambda}_{n}).
\end{align*}
This implies that $p$ is a submersion and
\begin{equation*}
   (\ker D_{(\mathbbm{1},\mu)}p)^{\perp} = \mathrm{span}\{E_{i,j} - E_{j,i} : 1\leq i<j\leq n, (i,j)\neq (n-1,n)\}\oplus^{\perp} T_{\mu}(S^{n-2})_*.
\end{equation*}
Combining this with the fact that the restriction of $D_{(\mathbbm{1},\mu)}p$ to $T_{\mu}(S^{n-2})_*$ is an isometry we obtain
\begin{align*}
\txt{NJ}_{(\mathbbm{1},\mu)}p &= \sqrt{2}^{\frac{n(n-1)}{2}-1}\prod\limits_{1\leq i<j\leq n,\,(i,j)\neq (n,n-1)} |\lambda_i-\lambda_j|\\ &= \sqrt{2}^{\frac{n(n-1)}{2}-1} \prod_{1\leq i<j \leq n-2} \vert \mu_i-\mu_j\vert \, \prod_{i=1}^{n-2}\left\vert \mu_i - \frac{\mu_{n-1}}{\sqrt{2}}\right\vert^2,
\end{align*}
which finishes the proof.
\end{proof}
We now compute the volume of the fiber $p^{-1}(A), A\in \Delta_{\txt{sm}}$ that appears in \eqref{almost_the_volume}.
\begin{lemma}\label{lem volume of the fiber}
The volume of the fiber of $A\in \Delta_{\txt{sm}}$ under $p$ equals
$\vert p^{-1}(A)\vert = 2^{n}\pi\,(n-2)! .$
\end{lemma}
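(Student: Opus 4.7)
The plan is to describe $p^{-1}(A)$ explicitly as a disjoint union of copies of $O(2)$ and compute the total volume by multiplying the component count by $|O(2)|$ in the induced metric.

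Since $A\in\Delta_{\txt{sm}}$, I write its spectrum as $\{a_1,\ldots,a_{n-2},a,a\}$ with $a_1,\ldots,a_{n-2},a$ pairwise distinct, and let $E_a\subset\R^n$ denote the two-dimensional eigenspace of $a$. For $(C,\mu)\in p^{-1}(A)$, the multiset of diagonal entries of $\txt{diag}(\lambda_1(\mu),\ldots,\lambda_n(\mu))$ must coincide with the spectrum of $A$, which forces $\mu_{n-1}/\sqrt{2}=a$ and $(\mu_1,\ldots,\mu_{n-2})$ to be a permutation of $(a_1,\ldots,a_{n-2})$. This yields exactly $(n-2)!$ admissible values of $\mu$. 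Having fixed such a $\mu$, the equation $C^T D C=A$ (with $D:=\txt{diag}(\lambda_1(\mu),\ldots,\lambda_n(\mu))$) says that the rows $r_1,\ldots,r_n$ of $C$ form an orthonormal basis satisfying $Ar_i=D_{ii}r_i$: for $i\le n-2$ this determines $r_i$ up to an independent sign ($2^{n-2}$ total choices), while $(r_{n-1},r_n)$ ranges freely over orthonormal frames of $E_a$, i.e.\ over an $O(2)$-torsor.

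Consequently, the fiber decomposes as a disjoint union of $(n-2)!\cdot 2^{n-2}$ copies of $O(2)$, each embedded in $O(n)$ as a translate (by a fixed orthogonal matrix) of the block subgroup $\{\txt{diag}(I_{n-2},R):R\in O(2)\}$. The metric $\tfrac12\txt{tr}(u^T v)$ on $O(n)$ is bi-invariant (it is $\txt{Ad}$-invariant on $T_{\mathbbm{1}}O(n)$), so such translates are isometric to the subgroup itself; and a direct check on skew-symmetric tangent vectors $\bigl(\begin{smallmatrix}0&t\\-t&0\end{smallmatrix}\bigr)$ shows they have induced norm $|t|$, giving $|O(2)|=4\pi$ (two components, each a circle of circumference $2\pi$). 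Hence
\begin{equation*}
|p^{-1}(A)|=(n-2)!\cdot 2^{n-2}\cdot 4\pi=2^n\pi\,(n-2)!.
\end{equation*}
The one point requiring care is the normalization $|O(2)|=4\pi$ under this particular metric; the rest is a bookkeeping exercise using the spectral theorem.
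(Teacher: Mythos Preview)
Your proof is correct and follows essentially the same approach as the paper: count $(n-2)!$ permutations of the simple eigenvalues, $2^{n-2}$ sign choices for their eigenvectors, and an $O(2)$-worth of frames in the repeated eigenspace, then multiply by $|O(2)|=4\pi$. The only difference is cosmetic---the paper cites its general formula for $|O(n)|$ to get $|O(1)|^{n-2}|O(2)|=2^{n-2}\cdot 4\pi$, whereas you verify $|O(2)|=4\pi$ directly from the metric and justify the isometry of the translated copies via bi-invariance.
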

\begin{proof}
Let $A= p(C,\mu) \in \Delta_\mathrm{sm}$. The last coordinate $\mu_{n-1}$ is always mapped to the double eigenvalue $\lambda_{n-1}=\lambda_n$ of $A$, whereas there are $(n-2)!$ possibilities to arrange $\mu_1,\dots,\mu_{n-2}$. For fixed choice of $\mu$ there are $|O(1)|^{n-2}|O(2)|$ ways to choose $C\in O(n)$. Therefore, invoking \eqref{eq:volume of O(n)}, we obtain
$ \vert p^{-1}(A)\vert = \vert O(1)\vert^{n-2} \vert O(2)\vert \,(n-2)!=2^{n-2}\cdot 2^2\pi\cdot (n-2)! = 2^{n} \pi (n-2)!$.
\end{proof}
Combining \eqref{almost_the_volume} with Lemma \ref{lem_normal_jacobian} and Lemma \ref{lem volume of the fiber} we write for the normalized volume of $\Delta$:
\begin{equation*}
\frac{\vert \Delta\vert}{\vert S^{N-3}\vert} = \frac{\sqrt{2}^{\frac{n(n-1)}{2}-1-2n}}{\pi(n-2)!\vert S^{N-3}\vert}\int_{(C,\mu)\in O(n)\times  (S^{n-2})_*}\vert\Delta(\mu_1,\ldots,\mu_{n-2})\vert\,\prod_{i=1}^{n-2}\left\vert\mu_i-\frac{\mu_{n-1}}{\sqrt{2}}\right\vert^2\, \d (C,\mu),
\end{equation*}
where $\Delta(\mu_1,\dots,\mu_{n-2})=\prod_{1\leq i<j\leq n-2}(\mu_i-\mu_j)$.

The function being integrated is independent of $C\in O(n)$. Thus, using Fubini's theorem we can perform the integration over the orthogonal group. Furthermore, the integrand is a homogeneous function of degree $\tfrac{(n-2)(n+1)}{2}$. Passing from spherical coordinates to spatial coordinates and extending the domain of integration to the measure-zero set of points with repeated coordinates we obtain
$$\frac{\vert \Delta\vert}{\vert S^{N-3}\vert} = \frac{\sqrt{2}^{\frac{n(n-1)}{2}-1-2n}\vert O(n)\vert} {\pi\,(n-2)!\,\vert S^{N-3}\vert\,K}\,\int_{\mu \in \mathbb{R}^{n-1}}\,\vert\Delta(\mu_1,\ldots,\mu_{n-2})\vert\,\prod_{i=1}^{n-2}\left\vert\mu_i-\frac{\mu_{n-1}}{\sqrt{2}}\right\vert^2\, e^{-\frac{\Vert \mu \Vert^2}{2}}\d\mu$$
where
$$K=\int_0^\infty r^{\frac{(n-2)(n+1)}{2}+n-2} e^\frac{-r^2}{2}\,\d r = 2^{\frac{n(n+1)}{4}-2}\,\Gamma\left(\frac{n(n+1)}{4}-1 \right).$$
Let us write $u:=\tfrac{\mu_{n-1}}{\sqrt{2}}$ for the double eigenvalue and make a change of variables from $\mu_{n-1}$ to~$u$. Considering the eigenvalues $\mu_1,\ldots,\mu_{n-2}$ as the eigenvalues of a symmetric $(n-2)\times(n-2)$ matrix $Q$, by (\ref{eigenvalue_density}) we have
\begin{equation}\label{almost_the_volume1}
\frac{\vert \Delta\vert}{\vert S^{N-3}\vert} = \frac{\sqrt{2}^{\frac{n(n-1)}{2}-2n} \vert O(n) \vert\,Z_{n-2}} {\pi\,(n-2)!\,\vert S^{N-3}\vert\,K}\,\int_{u\in\mathbb{R}}\,\mean_{Q\sim \mathrm{GOE}(n-2)} [ \det(Q-u\mathbbm{1})^2] \, e^{-u^2}\d u.
\end{equation}
Using formulas \eqref{selberg} and \eqref{eq:volume of O(n)} for $Z_{n-2}$ and $\vert O(n)\vert$ respectively we write
  \begin{align*}
   \vert O(n)\vert\cdot Z_{n-2} & =     \frac{2^{n} \pi^\frac{n(n+1)}{4}}{\prod_{i=1}^{n} \Gamma(\tfrac{i}{2})} \cdot \sqrt{2\pi}^{\,n-2} \prod_{i=1}^{n-2} \frac{\Gamma(1+\tfrac{i}{2})}{\Gamma(\tfrac{3}{2})}\\
    & = \frac{2^{n} \pi^\frac{n(n+1)}{4}}{\prod_{i=1}^{n} \Gamma(\tfrac{i}{2})} \cdot \sqrt{2\pi}^{\,n-2} \, \frac{\prod_{i=1}^{n-2} \frac{i}{2}\,\Gamma(\tfrac{i}{2})}{(\frac{\sqrt{\pi}}{2})^{n-2}}\\
    & = \frac{\sqrt{2}^{3n-2} \pi^\frac{n(n+1)}{4}\,(n-2)!}{\Gamma(\frac{n}{2})\Gamma(\frac{n-1}{2})}\\
    & = \sqrt{2}^{5n-6} \pi^{\frac{n(n+1)}{4}-1},
  \end{align*}
where in the last step the duplication formula for Gamma function $\Gamma(\tfrac{n}{2})\Gamma(\tfrac{n-1}{2}) = 2^{2-n} \sqrt{\pi}\,(n-2)!$ has been used. Let's recall the formula for the volume of the $(N-3)$-dimensional unit sphere:  $|S^{N-3}| = 2\pi^\frac{N-2}{2} /\Gamma(\tfrac{N-2}{2})$. Recalling that $N=\frac{n(n+1)}{2}$ we simplify the constant in~(\ref{almost_the_volume1})
\begin{align*}
\frac{\sqrt{2}^{\frac{n(n-1)}{2}-2n}}{\pi\,(n-2)!}  \cdot \frac{\vert O(n)\vert\cdot Z_{n-2}}{\vert S^{N-3}\vert\cdot K}
 = \;&\frac{\sqrt{2}^{\frac{n(n-1)}{2}-2n}}{\pi\,(n-2)!} \cdot  \sqrt{2}^{5n-6} \pi^{\frac{n(n+1)}{4}-1} \cdot \frac{\Gamma(\frac{N-2}{2})}{2\pi^\frac{N-2}{2}}\cdot \frac{2^{2-\frac{n(n+1)}{4}}}{\Gamma(\frac{n(n+1)}{4}-1 )}\\
= \;&\frac{2^{n-2}}{\sqrt{\pi}\,(n-2)!}.
\end{align*}
Plugging this into (\ref{almost_the_volume1}) we have
\begin{equation}\label{almost_the_volume2}
\frac{\vert \Delta\vert}{\vert S^{N-3}\vert} = \frac{2^{n-1}}{\sqrt{\pi}\,n!} \,\binom{n}{2}\,\int_{u\in\mathbb{R}}\,\mean_{Q\sim \mathrm{GOE}(n-2)} [\det(Q-u\mathbbm{1})^2]\, e^{-u^2}\d u.
\end{equation}
Combining the last formula with Theorem \ref{thm:charpol} whose proof is given in Section \ref{sec:RMTproofs} we finally derive the claim of Theorem \ref{thm:vol}: $
\tfrac{|\Delta|}{|S^{N-3}|} = {n \choose 2}.$
\begin{rem}
The proof can be generalized to subsets of $\Delta$ that are defined by an eigenvalue configuration given by a measurable subset of $(S^{n-2})_*$. Such a configuration only adjust the domain of integration in \cref{almost_the_volume2}. For instance, consider the subset
$$(S^{n-2})_1 = \{(\mu_1,\ldots,\mu_{n-1}) \in (S^{n-2})_*\mid \mu_{n-1} < \mu_i \text{ for } 1\leq i\leq n-2\}.$$
It is an open semialgebraic subset of $(S^{n-2})_*$ and $\Delta_1:=p(O(n) \times (S^{n-2})_1)$ is the smooth part of the matrices whose two smallest eigenvalues coincide. Following the proof until \cref{almost_the_volume2}, we get
\begin{equation*}
\frac{\vert \Delta_1\vert}{\vert S^{N-3}\vert} = \frac{2^{n-1}}{\sqrt{\pi}\,n!} \,\binom{n}{2}\,\int_{u\in\mathbb{R}}\,\mean_{Q\sim \mathrm{GOE}(n-2)} [\det(Q-u\mathbbm{1})^2 \, \mathbf{1}_{\{Q \succ u\mathbbm{1}\}}]\, e^{-u^2}\d u,
\end{equation*}
where $\mathbf{1}_{\{Q \succ u\mathbbm{1}\}}$ is the indicator function of $Q-u\mathbbm{1}$ being positive definite.
\end{rem}
\subsection{Multiplicities in a random family}
In this subsection we prove Theorem \ref{prop:family}.

The proof consists of an application of the Integral Geometry Formula from \cite[p.\ 17]{Howard}. Denote $\widehat{F}=\pi\circ F:\Omega\to S^{N-1}$. Then, by assumption, with probability one we have:
\begin{equation*}
  \#F^{-1}(\sC(\Delta))=\#\widehat{F}^{-1}(\Delta)=\#\widehat F(\Omega)\cap \Delta.
\end{equation*}
Observe also that, since the list $(f_1, \ldots, f_N)$ consists of i.i.d. random Gaussian fields, then for every $g\in O(N)$ the random maps $\widehat F$ and $g\circ \widehat F$ have the same distribution and
\begin{align*}\mean  \#F^{-1}(\sC(\Delta))&=\mean\#\widehat F(\Omega)\cap \Delta\\
&=\mean \#g(\widehat F(\Omega))\cap \Delta\\
&=\frac{1}{|O(N)|}\int_{O(N)}\mean \#(g(\widehat F(\Omega))\cap \Delta) dg\\
&=\mean \frac{1}{|O(N)|}\int_{O(N)} \#(g(\widehat F(\Omega))\cap \Delta )dg\\
&=\mean 2\frac{|\widehat F(\Omega)|}{|S^2|}\frac{| \Delta|}{|S^{N-3}|}={n\choose 2}\mean 2\frac{|\widehat F(\Omega)|}{|S^2|}.
\end{align*}
We have used the Integral Geometry Formula \cite[p.\ 17]{Howard} in the last step. Let $L=\{x_1=x_2=0\}$ be the codimension-two subspace of $\mathrm{Sym}(n, \R)$ given by the vanishing of the first two coordinates (in fact: any two coordinates). The conclusion follows by applying integral geometry again:
\begin{equation*} \mean 2\frac{|\widehat F(\Omega)|}{|S^2|}=\mean \frac{1}{|O(N)|}\int_{O(N)} \#(g(\widehat F(\Omega))\cap L )dg=\mathbb{E}\#F^{-1}(L)=\mean \#\{f_1=f_2=0\}.\end{equation*}
This finishes the proof.\qed

\section{The second moment of the characteristic polynomial of a goe matrix}\label{sec:RMTproofs}
In this section we give a proof of Theorem \ref{thm:charpol}. Let us first recall some ingredients and prove some auxiliary results.
\begin{lemma}\label{ratio} Let $P_m=2^{1-m^2}\sqrt{\pi}^{m}\prod_{i=0}^m (2i)!$ and let $Z_{2m}$ be the normalization constant from~\cref{selberg}. Then
  $P_m=2^{1-2m}\,Z_{2m}.$
\end{lemma}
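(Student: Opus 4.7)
The plan is to prove the identity by direct computation: expand both sides using the closed form of $Z_{2m}$ from \eqref{selberg} and the given definition of $P_m$, then show the two expressions agree factor-by-factor in the three groups of constants: powers of $2$, powers of $\sqrt{\pi}$, and factorials.

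First I would simplify $Z_{2m}=\sqrt{2\pi}^{\,2m}\prod_{i=1}^{2m}\Gamma(1+i/2)/\Gamma(3/2)^{2m}$. Since $\Gamma(3/2)=\sqrt\pi/2$ we get $\Gamma(3/2)^{2m}=\pi^m\cdot 2^{-2m}$ and $\sqrt{2\pi}^{\,2m}=2^m\pi^m$, so these combine into a pure power of $2$ and the $\pi^m$ cancels:
\begin{equation*}
2^{1-2m}Z_{2m}=2^{1+m}\prod_{i=1}^{2m}\Gamma\!\left(1+\tfrac{i}{2}\right).
\end{equation*}
The heart of the argument is then to evaluate $\prod_{i=1}^{2m}\Gamma(1+i/2)$ by splitting the index set according to the parity of $i$: for even $i=2k$ with $k=1,\dots,m$ the factor is $\Gamma(k+1)=k!$, while for odd $i=2k-1$ with $k=1,\dots,m$ the factor is $\Gamma(k+1/2)$, for which I would use the standard closed form $\Gamma(k+1/2)=\sqrt\pi\,(2k)!/(4^k k!)$.

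Substituting these expressions, the $k!$ from the even-index product cancels the $k!$ in the denominator of the odd-index product. The powers of $4^k$ collect to $4^{m(m+1)/2}=2^{m(m+1)}$ in the denominator, and the $m$ copies of $\sqrt\pi$ contribute the single factor $\sqrt\pi^{\,m}$. Combining with the prefactor $2^{1+m}$ obtained above gives
\begin{equation*}
2^{1-2m}Z_{2m}=2^{1+m-m(m+1)}\sqrt\pi^{\,m}\prod_{k=1}^{m}(2k)!=2^{1-m^{2}}\sqrt\pi^{\,m}\prod_{k=1}^{m}(2k)!,
\end{equation*}
which matches $P_m$ once one notes that the $i=0$ term in the product defining $P_m$ is $(2\cdot 0)!=1$. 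No obstacle beyond careful bookkeeping is expected; the one point to verify cleanly is the exponent $1+m-m(m+1)=1-m^{2}$, which is what makes the identity work.
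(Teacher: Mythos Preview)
Your proof is correct and is essentially the same as the paper's: both compute $Z_{2m}$ by grouping the factors $\Gamma(1+i/2)$ in pairs according to the parity of $i$. The only cosmetic difference is that the paper applies the Legendre duplication formula $\Gamma(z)\Gamma(z+\tfrac12)=\sqrt{\pi}\,2^{1-2z}\Gamma(2z)$ with $z=i+\tfrac12$ directly to each pair $\Gamma(i+\tfrac12)\Gamma(i+1)$, whereas you use the equivalent closed form $\Gamma(k+\tfrac12)=\sqrt{\pi}\,(2k)!/(4^{k}k!)$ and let the $k!$ cancel against the even-index factor; the bookkeeping and final exponent $1+m-m(m+1)=1-m^{2}$ come out identically.
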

\begin{proof}
The formula \cref{selberg} for $Z_{2m}$ reads
$$Z_{2m} = \sqrt{2\pi}^{2m} \prod_{i=1}^{2m} \frac{\Gamma\left(\frac{i}{2}+1\right)}{\Gamma\left(\frac{3}{2}\right)} = (2\pi)^m \prod_{i=1}^{m} \frac{\Gamma\left(\tfrac{2i-1}{2} +1\right)\Gamma\left(\tfrac{2i}{2} +1\right)}{\left(\frac{\sqrt{\pi}}{2}\right)^2}.$$
Using the formula  $\Gamma(z)\Gamma(z+\tfrac{1}{2}) = \sqrt{\pi}2^{1-2z} \Gamma(2z)$ \cite[43:5:7]{atlas} with $z=i+1/2$ we obtain
$$Z_{2m}= 2^{\,3m} \prod_{i=1}^m \sqrt{\pi} 2^{1-2(i+1/2)} \Gamma(2(i+1/2))=2^{\,2m-m^2}\sqrt{\pi}^m \prod_{i=1}^m (2i)! = 2^{\,2m-1}P_m.$$
This proves the claim.
\end{proof}

Recall now that the \emph{(physicist's) Hermite polynomials} $H_i(x),\, i=0,1,2, \dots$ form a family of orthogonal polynomials on the real line with respect to the measure $e^{-x^2}dx$. They are defined by
\begin{align*}
  H_i(x)=(-1)^i e^{x^2}\frac{\textrm{d}^i}{\textrm{d}x^i} e^{-x^2},\quad i\geq 0
\end{align*}
and satisfy
\begin{equation}\label{ortho_rel}
\int_{u\in\HR}  H_i(u)H_j(u)e^{-u^2}\,\d u = \begin{cases} 2^ii!\sqrt{\pi},& \text{ if } i=j\\ 0,&\text{ else.}\end{cases}
\end{equation}
A Hermite polynomial is either odd (if the degree is odd) or even (if the degree is even) function:
\begin{equation}\label{hermite_symmetry}
H_i(-x)=(-1)^i H_{i}(x);
\end{equation}
and its derivative satisfies
\begin{equation}\label{hermite_derivative}
H_i'(x)=2iH_{i-1}(x);
\end{equation}
see \cite[(24:5:1)]{atlas}, \cite[(8.952.1)]{Gradshteyn2015} for these properties.

The following proposition is crucial for the proof of \cref{thm:charpol}.
\begin{prop}[Second moment of the characteristic polynomial]
\label{thm_char_poly} For a fixed positive integer $k$ and a fixed $u\in\HR$ the following holds.
\begin{enumerate} \item If $k=2m$ is even, then
  $$\mean\limits_{Q\sim \mathrm{GOE}(k)}\det(Q-u\mathbbm{1})^2 = \frac{(2m)!}{2^{2m}}\,\sum_{j=0}^m \frac{2^{-2j-1}}{(2j)!}\,
  \det X_j(u),$$
  where $$X_j(u)=\begin{pmatrix}  H_{2j}(u) &  H_{2j}'(u) \\ H_{2j+1}(u)-H_{2j}'(u) & H_{2j+1}'(u)-H_{2j}''(u) \end{pmatrix}.$$
\item If $k=2m+1$ is odd, then
$$\mean\limits_{Q\sim \mathrm{GOE}(k)}\det(Q-u\mathbbm{1})^2 =\frac{\sqrt{\pi}(2m+1)!}{2^{4m+2}\,\Gamma(m+\tfrac{3}{2})} \sum_{j=0}^m \frac{2^{-2j-2}}{(2j)!}\,
\det Y_j(u),$$
where
$$Y_j(u)=\begin{pmatrix}
 \frac{(2j)!}{j!} & H_{2j}(u) &  H_{2j}'(u) \\
0 & H_{2j+1}(u)-H_{2j}'(u) & H_{2j+1}'(u)-H_{2j}''(u)\\
 \tfrac{(2m+2)!}{(m+1)!} & H_{2m+2}(u) &  H_{2m+2}'(u)
\end{pmatrix}.$$
\end{enumerate}
\end{prop}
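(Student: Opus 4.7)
My approach is a classical eigenvalue-density plus skew-orthogonal-polynomial computation. First, I will use the joint eigenvalue density \eqref{eigenvalue_density} to rewrite
\begin{equation*}
\mean_{Q\sim\mathrm{GOE}(k)}[\det(Q-u\mathbbm{1})^2] = \frac{1}{Z_k}\int_{\R^k}\prod_{i=1}^k(\lambda_i-u)^2\,|\Delta(\lambda)|\,e^{-\Vert\lambda\Vert^2/2}\,\d\lambda,
\end{equation*}
and restrict to the Weyl chamber $\{\lambda_1<\cdots<\lambda_k\}$ at the cost of a factor $k!$ to drop the absolute value. The central algebraic input is a \emph{confluent Vandermonde identity}: starting from $\Delta(\lambda_1,\ldots,\lambda_k,v_1,v_2) = \Delta(\lambda)(v_2-v_1)\prod_i(v_1-\lambda_i)(v_2-\lambda_i)$, setting $v_1=u$, $v_2=u+\epsilon$, and sending $\epsilon\to 0$ after dividing by $\epsilon$, one obtains
\begin{equation*}
\prod_{i=1}^k(\lambda_i-u)^2\,\Delta(\lambda) = \det M(\lambda,u),
\end{equation*}
where $M(\lambda,u)$ is the $(k+2)\times(k+2)$ matrix whose first $k$ columns are $(\lambda_j^{i-1})_{i=1}^{k+2}$ and whose last two columns are $(u^{i-1})_i$ and $((i-1)u^{i-2})_i$.

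Next, I will replace the monomial row basis of $M$ by the skew-orthogonal polynomials $(q_0,q_1,q_2,\ldots)$ for the GOE skew inner product with Gaussian weight. A direct orthogonality computation (after rescaling from the $e^{-x^2}$ to the $e^{-x^2/2}$ weight) identifies these as $q_{2j}=H_{2j}$ and $q_{2j+1}=H_{2j+1}-H_{2j}'$, matching exactly the rows of $X_j(u)$. Row operations preserve $\det M$ up to a product of leading coefficients. A Laplace expansion of $\det M$ along its last two columns then writes it as a sum over pairs $1\le a<b\le k+2$ of $2\times 2$ minors built from $q_{a-1}(u),q_{a-1}'(u),q_{b-1}(u),q_{b-1}'(u)$ times complementary $k\times k$ determinants in the $\lambda$'s.

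The integration over the Weyl chamber is then performed via \emph{de Bruijn's identity}: for even $k=2m$ it converts the ordered Gaussian integral of $\det[q_{r_i-1}(\lambda_j)]$ into the Pfaffian of the antisymmetric matrix
\begin{equation*}
A_{rs} = \iint_{x<y}\bigl[q_{r-1}(x)q_{s-1}(y)-q_{r-1}(y)q_{s-1}(x)\bigr]\,e^{-(x^2+y^2)/2}\,\d x\,\d y,
\end{equation*}
and for odd $k=2m+1$ it yields a \emph{bordered} Pfaffian with an extra row/column of boundary integrals $b_r=\int_{\R} q_{r-1}(x)e^{-x^2/2}\,\d x$. By construction, $A_{rs}$ is block-antidiagonal with nonzero $2\times 2$ blocks only on the skew-orthogonal pairs $\{(2j,2j+1)\}$, and a generating-function computation yields $b_{2j+1}=0$ and $b_{2j}=\int H_{2j}(x)e^{-x^2/2}\,\d x = \sqrt{2\pi}\,(2j)!/j!$.

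These orthogonality relations collapse each Pfaffian to a single surviving configuration, producing the single sum $\sum_{j=0}^m$ in the statement. For even $k$, only Laplace pairs $(a-1,b-1)=(2j,2j+1)$ contribute, and the corresponding $2\times 2$ minor is exactly $\det X_j(u)$. For odd $k$, two distinct Laplace pair choices survive for each $j$: the pair $(2j,2j+1)$ (leaving the boundary to pair with the unpaired top index $2m+2$) and the pair $(2j+1,2m+2)$ (leaving the boundary to pair with $2j$); these two contributions combine, by Laplace expansion along the first column, into the single $3\times 3$ determinant $\det Y_j(u)$, with the entries $(2j)!/j!$ and $(2m+2)!/(m+1)!$ coming from $b_{2j}$ and $b_{2m+2}$, and the zero in the middle row reflecting $b_{2j+1}=0$. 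The main obstacle will be constant bookkeeping: rescaling Hermite polynomials between the $e^{-x^2}$ and $e^{-x^2/2}$ weights, leading-coefficient corrections from the basis change, sign tracking in the Laplace and de Bruijn expansions, and simplification of $Z_k$ (for which \cref{ratio} is tailored). The structural content --- confluent Vandermonde, de Bruijn, and skew-orthogonality --- is standard and naturally produces the $2\times 2$ and $3\times 3$ determinantal forms of $X_j(u)$ and $Y_j(u)$.
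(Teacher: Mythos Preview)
Your approach is correct and is, in fact, the derivation that underlies the formulas you are trying to prove. The paper, however, takes a much shorter route: it simply quotes Mehta's formulas \cite[(22.2.38)--(22.2.39)]{mehta}, which already express $\mean\det(Q-u\mathbbm{1})^2$ as a sum of $2\times 2$ (respectively $3\times 3$) determinants in the skew-orthogonal polynomials $R_{2j},R_{2j+1}$ and the boundary integrals $g_i$. The paper then only has to identify $R_{2j}=2^{-2j}H_{2j}$, $R_{2j+1}=2^{-(2j+1)}(H_{2j+1}-H_{2j}')$, evaluate $g_{2j}=2^{-2j}\sqrt{2\pi}\,(2j)!/j!$ and $g_{2j+1}=0$, and collapse the normalization constants using \cref{ratio} and $Z_{2m+1}=2\sqrt{2}\,\Gamma(m+\tfrac32)Z_{2m}$; multilinearity of the determinant does the rest.

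What you outline --- confluent Vandermonde, basis change to skew-orthogonal polynomials, Laplace expansion along the two confluent columns, and de Bruijn's Pfaffian identity --- is precisely the machinery Mehta uses in \cite[\S22]{mehta} to obtain (22.2.38)--(22.2.39) in the first place. So your plan is not wrong, just one level deeper in the literature: you are reproving the cited input rather than invoking it. The payoff of your route is that it is self-contained and makes transparent why the rows of $X_j(u)$ are exactly the skew-orthogonal pair $(H_{2j},\,H_{2j+1}-H_{2j}')$, why the first column of $Y_j(u)$ carries the values $(2j)!/j!$, $0$, $(2m{+}2)!/(m{+}1)!$ (these are the de Bruijn boundary integrals $\int H_{2j}e^{-x^2/2}$), and why the odd case produces a $3\times3$ rather than a $2\times2$ block. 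The cost is the constant bookkeeping you flag; the paper sidesteps this entirely by citing Mehta and doing only the final rescaling.
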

\begin{proof}
In Section $22$ of \cite{mehta} one finds two different formulas for the even $k=2m$ and odd $k=2m+1$ cases. We evalute both seperately.

If $k=2m$, we have by \cite[(22.2.38)]{mehta} that
  $$\mean\det(Q-u\mathbbm{1})^2 =\frac{(2m)!\,P_m}{Z_{2m}} \sum_{j=0}^m \frac{2^{{2j-1}}}{(2j)!}\,
  \det\begin{pmatrix} R_{2j}(u)& R_{2j}'(u) \\ R_{2j+1}(u)& R_{2j+1}'(u) \end{pmatrix},$$
where $P_m=2^{1-m^2}\sqrt{\pi}^{m}\prod_{i=0}^m (2i)!$ is as in \cref{ratio}, $Z_{2m}$ is the normalization constant \cref{selberg} and where $R_{2j}(u)=2^{-2j} H_{2j}(u)$ and $R_{2j+1}(u)=2^{-(2j+1)}(H_{2j+1}(u)-H_{2j}'(u))$.
Using the multilinearity of the determinant we get
$$\mean\det(Q-u\mathbbm{1})^2 =\frac{(2m)!\,P_m}{Z_{2m}} \sum_{j=0}^m \frac{2^{-2j-2}}{(2j)!}\,
\det X_j(u).$$
By \cref{ratio} we have $\tfrac{P_m}{Z_{2m}}=  2^{1-2m}$. Putting everything together yields the first claim.

In the case $k=2m+1$ we get from \cite[(22.2.39)]{mehta} that
  $$\mean\det(Q-u\mathbbm{1})^2 =\frac{(2m+1)!\,P_m}{Z_{2m+1}} \sum_{j=0}^m \frac{2^{{2j-1}}}{(2j)!}\,
  \det\begin{pmatrix}
  g_{2j} & R_{2j}(u)& R_{2j}'(u) \\
  g_{2j+1} & R_{2j+1}(u)& R_{2j+1}'(u) \\
  g_{2m+2} & R_{2m+2}(u)& R_{2m+2}'(u)
  \end{pmatrix},$$
where $P_m$, $R_{2j}(u), R_{2j+1}(u)$ are as above and
  $$g_i = \int_{u\in\HR} R_i(u) \exp(-\tfrac{u^2}{2})\,\d u.$$
By \cref{hermite_symmetry}, the Hermite polynomial $H_{2j+1}(u)$ is an odd function. Hence, we have $g_{2j+1}=0$. For even indices we use \cite[(7.373.2)]{Gradshteyn2015} to get $g_{2j}=2^{-2j}\sqrt{2\pi} \tfrac{(2j)!}{j!}$. By the multilinearity of the determinant:
\begin{equation}\label{hallo99}\mean\det(Q-u\mathbbm{1})^2 =\frac{\sqrt{2\pi}(2m+1)!\,P_m}{2^{2m+2}Z_{2m+1}} \sum_{j=0}^m \frac{2^{-2j-2}}{(2j)!}\,
\det Y_j(u).\end{equation}
From \cref{selberg} one obtains $Z_{2m+1} = 2 \sqrt{2}\, \Gamma(m+\tfrac{3}{2})\,Z_{2m}$, which together with \cref{ratio} implies
 $$ \frac{P_m}{Z_{2m+1}}= \frac{2^{-2m}}{\sqrt{2}\, \Gamma(m+\tfrac{3}{2})}.$$
Plugging this into \cref{hallo99} we conlude that
\begin{align}\mean\det(Q-u\mathbbm{1})^2 =\frac{\sqrt{\pi}(2m+1)!}{2^{4m+2}\,\Gamma(m+\tfrac{3}{2})} \sum_{j=0}^m \frac{2^{-2j-2}}{(2j)!}\,
\det Y_j(u).\end{align}
\end{proof}
Everything is now ready for the proof of Theorem \ref{thm:charpol}
\begin{proof}[Proof of Theorem \ref{thm:charpol}]
Due to the nature of \cref{thm_char_poly} we also have to make a distinction for this proof.

In the case $k=2m$ we use the formula from \cref{thm_char_poly} (1) to write
  $$\int_{u\in\HR}\,\mean \det(Q-u\mathbbm{1})^2 e^{-u^2}\, \d u = \frac{(2m)!}{2^{2m}} \sum_{j=0}^m \frac{2^{-2j-1}}{(2j)!}\, \int_{u\in\HR}
  \det X_j(u)\,\d u.$$
By \cref{hermite_derivative} we have $H_i'(u)=2iH_{i-1}(u)$. Hence, $X_j(u)$ can be written as
  $$ \begin{pmatrix}  H_{2j}(u) &  4jH_{2j-1}(u) \\ H_{2j+1}(u)-4jH_{2j-1}(u) & 2(2j+1)H_{2j}(u)-8j(2j-1)H_{2j-2}(u) \end{pmatrix}.$$
From \cref{ortho_rel} we can deduce that
  \begin{align*}\int_{u\in\HR}
  \det X_j(u)\,\d u &= 2(2j+1) 2^{2j}(2j)!\sqrt{\pi} + 16j^2 2^{2j-1} (2j-1)!\sqrt{\pi}\\
  &= 2^{2j+1}(2j)!\sqrt{\pi}(4j+1).\nonumber
\end{align*}
From this we see that
\begin{align}\label{X_j}
  \sum_{j=0}^m \frac{2^{-2j-1}}{(2j)!}\, \int_{u\in\HR}
  \det X_j(u)\,\d u &= \sqrt{\pi} \sum_{j=0}^m (4j+1 ) = \sqrt{\pi} \,(m+1)(2m+1).
\end{align}
and hence,
\begin{align*}\int_{u\in\HR}\,\mean \det(Q-u\mathbbm{1})^2 e^{-u^2}\, \d u
=  \frac{(2m)!}{2^{2m}} \, \sqrt{\pi} \,(m+1)(2m+1)=  \frac{(2m+2)!}{2^{2m+1}} \, \sqrt{\pi} .
\end{align*}
Plugging back in $m=\tfrac{k}{2}$ finishes the proof of the case $k=2m$.

In the case $k=2m+1$ we use the formula from \cref{thm_char_poly} (2) to see that
$$\int_u\mean\det(Q-u\mathbbm{1})^2 \,e^{-u^2}\,\d u =\frac{\sqrt{\pi}(2m+1)!}{2^{4m+2}\,\Gamma(m+\tfrac{3}{2})} \sum_{j=0}^m \frac{2^{-2j-2}}{(2j)!}\,\int_u
\det Y_j(u) \,e^{-u^2}\,\d u.$$
Note that the top right $2\times 2$-submatrix of $Y_j(u)$ is $X_j(u)$, so that $\det Y_j(u)$ is equal to
  \begin{equation}\label{hallo1} \frac{(2m+2)!}{(m+1)!}\, \det X_j(u) + \frac{(2j)!}{j!}\, \det \begin{pmatrix}
H_{2j+1}(u)-H_{2j}'(u) & H_{2j+1}'(u)-H_{2j}''(u)\\
    H_{2m+2}(u) &  H_{2m+2}'(u)
  \end{pmatrix}.\end{equation}
Because taking derivatives of Hermite polynomials decreases the index by one \cref{hermite_derivative} and because the integral over a product of two Hermite polynomials is only non-vanishing, if their indices agree, the integral of the determinant in \cref{hallo1} is only non-vanishing for $j=m$, in which case it is equal to
$$\int_{u\in\HR} H_{2m+1}(u)H_{2m+2}'(u)\,e^{-u^2}\,\d u = 2(2m+2) 2^{2m+1} (2m+1)!\,\sqrt{\pi},$$
by \cref{ortho_rel} and \cref{hermite_derivative}. Hence,
  \begin{align*}&\int_{u\in\HR}
  \det Y_j(u) \,e^{-u^2}\,\d u \\
  = & \begin{cases}
  \frac{(2m+2)!}{(m+1)!}\,\int_{u\in\HR}
  \det X_m(u) \,e^{-u^2}\,\d u + \frac{(2m)!}{m!}\, 2^{2m+2}(2m+2)!\sqrt{\pi}, &\text{ if } j=m,\\[0.2cm]
  \frac{(2m+2)!}{(m+1)!}\,\int_{u\in\HR}
  \det X_j(u) \,e^{-u^2}\,\d u, & \text{ else.}
\end{cases}
\end{align*}
We find that
  \begin{align*}
    &\sum_{j=0}^m \frac{2^{-2j-2}}{(2j)!}\,\int_u
    \det Y_j(u) \,e^{-u^2}\,\d u\\
    =&\frac{(2m+2)!}{m!}\,\sqrt{\pi} + \frac{(2m+2)!}{(m+1)!} \sum_{j=0}^m \frac{2^{-2j-2}}{(2j)!}\,\int_u
    \det X_j(u) \,e^{-u^2}\,\d u\\
    =& \frac{(2m+2)!}{m!} \,\sqrt{\pi}+ \frac{(2m+2)!}{(m+1)!} \,\frac{\sqrt{\pi}}{2} \,(m+1)(2m+1)\\
    =& \frac{\sqrt{\pi}}{2}\,\frac{(2m+3)!}{m!};
\end{align*}
the second-to-last line by \cref{X_j}. It follows that
\begin{align*}\int_{u\in\HR}\mean\det(Q-u\mathbbm{1})^2 \,e^{-u^2}\,\d u&= \frac{\sqrt{\pi}(2m+1)!}{2^{4m+2}\,\Gamma(m+\tfrac{3}{2})}\,\frac{\sqrt{\pi}}{2}\,\frac{(2m+3)!}{m!}\\&= \frac{\pi(2m+1)!(2m+3)!}{2^{4m+3}\,\Gamma(m+\tfrac{3}{2})\,m!}.\end{align*}
It is not difficult to verify that the last term is $2^{-2m-2} \sqrt{\pi}\,(2m+3)!$. Substituting $2m+1=k$ shows the assertion in this case.
\end{proof}

\subsection*{Acknowledgements}The authors wish to thank A. Agrachev, P. B\"urgisser, A. Maiorana for helpful suggestions and remarks on the paper and B.~Sturmfels for pointing out reference \cite{SSV} for \eqref{eq:cut}.

\bibliographystyle{plain}

\bibliography{Disc}

\end{document}